\newtheorem{theorem}{Theorem}
\newtheorem*{theorem-cite}{Theorem}
\newtheorem{corollary}[theorem]{Corollary}
\newtheorem{lemma}[theorem]{Lemma}
\theoremstyle{definition}
\newtheorem{example}{Example}
\theoremstyle{remark}
\newtheorem*{claim}{Claim}
\newtheorem*{proof-claim}{Proof}
\newenvironment{changemargin}[2]{\begin{list}{}{%
\setlength{\topsep}{0pt}%
\setlength{\leftmargin}{0pt}%
\setlength{\rightmargin}{0pt}%
\setlength{\listparindent}{\parindent}%
\setlength{\itemindent}{\parindent}%
\setlength{\parsep}{0pt plus 1pt}%
\addtolength{\leftmargin}{#1}%
\addtolength{\rightmargin}{#2}%
}\item }{\end{list}}
\def\Ker{\operatorname{Ker}}
\def\H{\widetilde{H}}
\def\X{\mathsf{X}}
\def\L{\mathsf{L}}
\def\M{\mathsf{M}}
\def\N{\mathsf{N}}
\def\A{\mathsf{A}}
\def\B{\mathsf{B}}
\def\K{\mathsf{K}}
\def\R{\mathbb{R}}
\def\Z{\mathbb{Z}}
\def\F{\mathbb{F}}
\def\S{\mathcal{S}}
\def\C{\mathcal{C}}
\def\id{\operatorname{id}}
\def\id{\operatorname{id}}
\def\sd{\operatorname{sd}}
\def\sd{\operatorname{sd}}
\def\lk{\operatorname{lk}}
\DeclareMathOperator{\Ima}{Im}
\title{Different versions of the nerve theorem and rainbow simplices}  
\author{ Fr\'ed\'eric Meunier and Luis Montejano}
\address{F. Meunier, Universit\'e Paris Est, CERMICS, 77455 Marne-la-Vall\'ee CEDEX, France}
\email{frederic.meunier@enpc.fr}
\address{L. Montejano, Instituto de Matem\'aticas, National University of M\'exico at Quer\'etaro,
Juriquilla, Qro 76230}
\email{luismontej@gmail.com}
\keywords{Carrier theorem; homological Sperner lemma; nerve theorem}
\begin{document}

\begin{abstract}
Given a simplicial complex and a collection of subcomplexes covering it, the {\em nerve theorem}, a fundamental tool in topological combinatorics, guarantees a certain connectivity of the simplicial complex when connectivity conditions on the intersection of the subcomplexes are satisfied.

We show that it is possible to extend this theorem by replacing some of these connectivity conditions on the intersection of the subcomplexes by connectivity conditions on their union.  While this is interesting for its own sake, we use this extension to generalize in various ways the Meshulam lemma, a powerful homological version of the Sperner lemma. We also prove a generalization of the Meshulam lemma that is somehow reminiscent of the polytopal generalization of the Sperner lemma by De Loera, Peterson, and Su. For this latter result,  we use a different approach and we do not know whether there is a way to get it via a nerve theorem of some kind.
\end{abstract}

\maketitle

\section{Introduction}\label{sec:intro}

The nerve theorem is a fundamental result in topological combinatorics. It has many applications, not only in combinatorics, but in category and homotopy theory and also  in applied and computational topology.  Roughly speaking, it relates the topological ``complexity'' of a simplicial complex to the topological ``complexity'' of the intersection complex of a ``nice'' cover of it. Stating the nerve theorem with conditions on intersections seems to be somehow dictated by its very nature. It might thus come as a surprise that a nerve theorem for unions also holds. 

In this paper, we prove such a theorem. Actually, we prove a theorem that interpolates between a version of the nerve theorem with intersections and a version with unions. Given a simplicial complex $\X$ and a finite collection of subcomplexes $\Gamma$, the {\em nerve} of $\Gamma$, denoted by $\N(\Gamma)$, is the simplicial complex with vertices the subcomplexes in $\Gamma$ and whose simplices are the subcollections of $\Gamma$ with a nonempty intersection.

We fix a field $\F$ throughout the paper.

\begin{theorem} \label{thmixed}
Consider a simplicial complex $\X$ with a finite collection $\Gamma$ of subcomplexes such that $\bigcup\Gamma=\X$.
Let $k$ and $\ell$ be two integers such that $-1\leq k\leq \ell<|\Gamma|$.
Suppose that the following two conditions are satisfied:
\begin{enumerate}[label=\textup{(\arabic*)}]
\item\label{inter} $\H_{k-|\sigma |}\big(\bigcap\sigma,\F\big)=0$ for every $\sigma\in\N(\Gamma)$ of dimension at most $k$.
\item\label{union} $\H_{|\sigma |-2} \big(\bigcup\sigma,\F\big)=0$ for every $\sigma\in\N(\Gamma)$ of dimension at least $k+1$ and at most $\ell$.
\end{enumerate}
Then $\dim\H_{\ell}(\N(\Gamma),\F)\leq\dim\H_{\ell}(\X,\F)$.
\end{theorem}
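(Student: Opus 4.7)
My plan is to analyze the Mayer–Vietoris spectral sequence associated to the cover $\Gamma$. Form the double complex
$$C_{p,q} = \bigoplus_{\sigma\in\N(\Gamma),\ \dim\sigma=p} C_q\Bigl(\textstyle\bigcap\sigma;\F\Bigr),$$
with vertical simplicial boundary and horizontal Čech differential; the column-filtered spectral sequence has $E^1_{p,q}=\bigoplus_{\dim\sigma=p}H_q(\bigcap\sigma;\F)$, converges to $H_{p+q}(\X;\F)$, and, after the bottom-row augmentation, satisfies $E^2_{p,0}\cong\H_p(\N(\Gamma);\F)$. Since $E^\infty_{\ell,0}$ is a subquotient of $\H_\ell(\X;\F)$, the theorem reduces to proving $E^2_{\ell,0}=E^\infty_{\ell,0}$, i.e., that every differential $d^r\colon E^r_{\ell,0}\to E^r_{\ell-r,\,r-1}$ with $r\ge 2$ vanishes.

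The base case $\ell=k$ is immediate from condition~\ref{inter} alone: writing $s=|\sigma|=\ell-r+1$, the target $E^1_{\ell-r,r-1}$ is $\bigoplus_{|\sigma|=s}H_{r-1}(\bigcap\sigma;\F)=\bigoplus_{|\sigma|=s}\H_{k-s}(\bigcap\sigma;\F)$, which vanishes by~\ref{inter}. The genuine problem is $\ell>k$: condition~\ref{inter} vanishes $E^1$ only on the antidiagonal $p+q=k-1$, whereas the $d^r$-targets sit on $p+q=\ell-1$, so~\ref{inter} cannot annihilate them directly and condition~\ref{union} must step in.

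To exploit condition~\ref{union}, I would, for each offending $\sigma$ (with $k+2\le|\sigma|\le\ell+1$), invoke a second Mayer–Vietoris spectral sequence, this time for $\bigcup\sigma$ with its natural cover by the elements of $\sigma$:
$$\widetilde E^1_{a,b}(\sigma) = \bigoplus_{\tau\subseteq\sigma,\,|\tau|=a+1} H_b\Bigl(\textstyle\bigcap\tau;\F\Bigr)\ \Longrightarrow\ H_{a+b}(\bigcup\sigma;\F).$$
Condition~\ref{inter}, applied to the proper subsimplices of $\sigma$, kills a large block of $\widetilde E^1$ entries. The vanishing $\H_{|\sigma|-2}(\bigcup\sigma;\F)=0$ from condition~\ref{union} then forces relations on the surviving classes, and in particular on $H_{\ell-|\sigma|}(\bigcap\sigma;\F)$, which transfer---via functoriality of the two spectral sequences under the natural inclusions $\bigcap\sigma\hookrightarrow\bigcup\sigma$---into the primary spectral sequence, producing the cancellations that make the problematic $d^r$'s vanish. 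An induction on $\ell-k$ would most likely organize this, converting condition~\ref{union} at level $s=k+2$ into an effective~\ref{inter}-style vanishing at the next level.

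The main obstacle I foresee is the coupling between the two spectral sequences: one must verify, at the chain level or via a precise comparison of filtrations, that the vanishing produced by condition~\ref{union} on the auxiliary side lands exactly on the classes that would otherwise create nonzero $d^r$'s in the primary sequence. A single unified double complex---perhaps the blow-up $\bigsqcup_\sigma|\sigma|\times\bigcap\sigma/\!\sim$ equipped with both the projection onto $|\N(\Gamma)|$ and the projection onto $\X$---is probably the cleanest framework for making this coupling rigorous, in preference to handling the two spectral sequences in isolation.
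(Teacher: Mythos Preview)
Your Mayer--Vietoris spectral-sequence setup is natural, and your treatment of the case $\ell=k$ is correct: condition~\ref{inter} annihilates $E^1_{\ell-r,\,r-1}$ for every $r\ge 2$, so the edge term survives to $E^\infty$ and the inequality follows. This is essentially the standard proof of the classical homological nerve theorem.

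For $\ell>k$, however, what you have written is a plan, not a proof, and the obstacle you yourself flag is genuine. The target $E^r_{\ell-r,\,r-1}$ is governed by $H_{\ell-|\sigma|}(\bigcap\sigma)$ with $|\sigma|=\ell-r+1$; in your auxiliary spectral sequence for $\bigcup\sigma$ this class lives in total degree $\ell-1$, whereas condition~\ref{union} controls only total degree $|\sigma|-2$. Except when $|\sigma|=\ell+1$ these degrees differ, and functoriality alone cannot transport a vanishing from one total degree to another. Your proposed induction on $\ell-k$ might organise a bridge, but you have not stated what the inductive hypothesis would be or how the step would go; the phrase ``converting condition~\ref{union} into an effective~\ref{inter}-style vanishing'' hides exactly the missing argument.

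The paper's proof is organised quite differently, in a sense dually to yours. It inducts on $k$ with $\ell$ fixed, taking as base case $k=-1$ (pure union conditions), which it establishes separately via an acyclic-carrier comparison between $\sd\X$ and $\sd\N(\Gamma)$. The inductive step does not analyse differentials at all; instead it \emph{modifies the complex}. A lemma (Lemma~\ref{lem:killing}, which genuinely needs field coefficients) attaches simplices of dimension at most $k-1$ to kill low-degree homology of the intersections without disturbing degrees $\ge k$, producing a new pair $(\X(0),\Gamma(0))$ with the same nerve and the same $\H_\ell$ but satisfying the hypotheses for $k-1$. A separate Mayer--Vietoris induction (Lemma~\ref{lem:aux}) then shows that the intersection hypotheses~\ref{inter} up to level $k$ already force the single new union hypothesis~\ref{union} needed at dimension $k$, closing the loop. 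So the paper trades the delicate differential-chasing you anticipate for the freedom to change $\X$ and $\Gamma$; your route, if it can be completed, would have the merit of working with the original cover throughout, but that is precisely what makes the coupling you left open hard to carry out.
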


The case $k=\ell$ was obtained in \cite{MoO} and is a generalization of a classical version of the homological nerve theorem (see \cite[Theorem 6.1]{Mh2}). There are actually many ``classical'' versions, some of them with a homotopy condition in place of the homology condition. It seems that the oldest reference to a nerve theorem with a homology condition (actually an acyclicity condition) is due to Leray~\cite{Leray}. The case $k=-1$ is the nerve theorem for unions mentioned above. 

The second purpose of this paper is to provide a generalization of a related result -- Meshulam's lemma~\cite[Proposition 1.6]{Mh2} and~\cite[Theorem 1.5]{Mh1} -- which has several applications in combinatorics, such as the generalization of Edmonds' intersection theorem by Aharoni and Berger~\cite{AB}. Meshulam's lemma is a Sperner-lemma type result, dealing with coloured simplicial complexes and colourful simplices, and in which the classical boundary condition of the Sperner lemma is replaced by an acyclicity condition. Its original proof relies on a certain version of the nerve theorem and we show that Theorem~\ref{thmixed} can be used in the same vein to prove some variations of Meshulam's lemma.  We also prove -- with a completely different approach -- the following generalization of this lemma, which can be seen as a homological counterpart of the polytopal Sperner lemma by De Loera, Peterson, and Su~\cite{polytopal}, in a same way that Meshulam's lemma is a homological counterpart of the classical Sperner lemma. It can also be seen as a homological counterpart of Musin's Sperner-type results for pseudomanifolds~\cite{musin2015extensions} and of Theorem 4.7 in the paper by Asada et al.~\cite{asada2018fair}. We leave as an open question the existence of a proof based on a nerve theorem of some kind.

We recall that a {\em pseudomanifold} is a simplicial complex that is pure, non-branching (each ridge is contained in exactly two facets), and strongly connected (the dual is connected).  A {\em colourful simplex} in a simplicial complex whose vertices are partitioned into subsets $V_0,\ldots,V_m$ is a simplex with at most one vertex in each $V_i$.  Given a simplicial complex $\K$ and a subset $U$ of its vertices, $\K[U]$ is the subcomplex induced by $U$, i.e. the simplicial complex whose simplices are exactly the simplices of $\K$ whose vertices are all in $U$.

\begin{theorem}\label{thm:homol_sperner_gen}
Consider a simplicial complex $\K$ whose vertices are partitioned into $m+1$ subsets $V_0,\ldots,V_m$ and a nontrivial abelian group $A$. Let $\M$ be a $d$-dimensional pseudomanifold with vertex set $\{0,\dots,m\}$ such that $\H_d(\M,A)=A$.  Suppose that $\H_{|\sigma|-2}(\K[\bigcup_{i\in\sigma}V_i],A)=0$ for every $\sigma\in\M$. If $\H_d(\K,A)=0$ as well, then the number of $(d+1)$-dimensional colourful simplices in $\K$ is at least $m-d$.
\end{theorem}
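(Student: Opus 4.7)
The plan is to build a distinguished $d$-cycle $\Xi$ in $\K$ whose ``colour image'' is the fundamental cycle $\mu$ of $\M$, then use the hypothesis $\H_d(\K,A)=0$ to realize $\Xi$ as a boundary $\partial\Theta$ with $\Theta\in C_{d+1}(\K,A)$, and finally push $\Theta$ through a colouring chain map into the ambient $m$-simplex $\Delta_m$ on vertex set $\{0,\dots,m\}$. The pushed-forward $(d+1)$-chain is a filling of $\mu$ in $\Delta_m$, and the size of its support will yield the desired lower bound on the number of colourful $(d+1)$-simplices of $\K$.

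Let $c\colon V(\K)\to\{0,\dots,m\}$ denote the colouring induced by the partition, and define the chain map $\phi\colon C_*(\K,A)\to C_*(\Delta_m,A)$ by sending an ordered simplex $[v_0,\dots,v_k]$ to $[c(v_0),\dots,c(v_k)]$ when its colours are distinct and to $0$ otherwise; antisymmetry of ordered simplices makes this a chain map (pairs of faces with repeated colours cancel). Writing $\K_\sigma:=\K[\bigcup_{i\in\sigma}V_i]$, I will produce chains $\gamma_\sigma\in C_{|\sigma|-1}(\K_\sigma,A)$ for every $\sigma\in\M$ by induction on $\dim\sigma$, in the classical carrier/acyclic-cover style. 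The induction starts from vertices $x_i\in V_i$, which exist because $\H_{-1}(\K[V_i],A)=0$. At each step, the chain $\sum_{i\in\sigma}\pm\gamma_{\sigma\setminus\{i\}}\in C_{|\sigma|-2}(\K_\sigma,A)$ is a cycle (from $\partial^2=0$ on the already-built $\gamma$'s) and hence bounds thanks to $\H_{|\sigma|-2}(\K_\sigma,A)=0$; this yields $\gamma_\sigma$ with $\partial\gamma_\sigma=\sum_{i\in\sigma}\pm\gamma_{\sigma\setminus\{i\}}$. A parallel induction shows $\phi(\gamma_\sigma)$ equals the oriented simplex $\sigma$, because $\phi(\gamma_\sigma)$ is a priori of the form $N\cdot\sigma$ with $N\in A$ and the recursion $\partial\phi(\gamma_\sigma)=\partial\sigma$ forces $N=1$. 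Setting $\Xi:=\sum_{F\in\M_d}a_F\gamma_F$ where $\mu=\sum_F a_FF$, the identity $\partial\mu=0$ gives $\partial\Xi=0$ and $\phi(\Xi)=\mu$ by linearity.

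Because $\H_d(\K,A)=0$, the cycle $\Xi$ is the boundary of some $\Theta\in C_{d+1}(\K,A)$, whence $\partial\phi(\Theta)=\phi(\Xi)=\mu$ in $C_d(\Delta_m,A)$. Every $(d+1)$-simplex of $\Delta_m$ occurring in $\phi(\Theta)$ is the colour pattern of a colourful $(d+1)$-simplex of $\K$ (namely, one occurring in $\Theta$), so the number of colourful $(d+1)$-simplices of $\K$ is at least $|\supp\phi(\Theta)|$. It then suffices to prove the combinatorial lower bound $|\supp\phi(\Theta)|\ge m-d$, applied to $\theta:=\phi(\Theta)$.

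This last combinatorial claim is what I expect to be the main obstacle: any $\theta\in C_{d+1}(\Delta_m,A)$ with $\partial\theta=\mu$ (a fundamental cycle of a $d$-pseudomanifold $\M$ on all $m+1$ vertices with $\H_d(\M,A)=A$) satisfies $|\supp\theta|\ge m-d$. This is a homological counterpart of the classical fact that any simplicial triangulation of a $(d+1)$-dimensional ball using $m+1$ vertices has at least $m-d$ top simplices. A natural approach is induction on $m$, with base case $m=d+1$ trivial (any nonzero $\theta$ has nonempty support): fix a vertex $v$ and use the cone decomposition $\Delta_m=\{v\}*\Delta_{m-1}$ to write $\theta=\theta_0+v*\theta_1$ and $\mu=\mu_0+v*\mu_1$, yielding $\partial\theta_1=-\mu_1$. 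The catch is that $\mu_1$ encodes the link of $v$ in $\M$, which need not itself be a pseudomanifold for a general $\M$, so the inductive hypothesis does not directly apply to $\theta_1$. Overcoming this, perhaps by choosing $v$ judiciously, by treating the support of $\theta$ as a $(d+1)$-pseudomanifold-with-boundary and invoking a shelling/pulling-triangulation count, or by a genuinely different reduction, is where the real content of the proof must come in, and is no doubt the reason that the authors warn the argument is of a ``completely different approach'' from the nerve-theoretic methods of the rest of the paper.
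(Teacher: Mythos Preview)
Your architecture is exactly that of the paper: your $\gamma_\sigma$'s are the paper's chain map $f_\sharp(\sigma)$, your $\phi$ is their $\lambda_\sharp$, and the reduction to the claim that any $(d+1)$-chain $\theta$ in $\Delta_m$ with $\partial\theta=\mu$ satisfies $|\supp\theta|\ge m-d$ is precisely what they do. You are also right that this counting claim is where the content lies, and that the vertex-link induction you sketch founders because links in a pseudomanifold need not be pseudomanifolds.

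The paper's proof of the counting claim (Lemma~\ref{lem:count}) sidesteps that difficulty entirely. Form the graph $G(\theta)$ on $\supp\theta$ in which two $(d+1)$-simplices are adjacent when they share a $d$-face. For each connected component $K$ write $\theta_K$ for the corresponding subchain; since no $d$-face is shared between simplices of distinct components, the sets $\supp\partial\theta_K$ are pairwise disjoint and contained in $\supp\mu$. Each $\partial\theta_K$ is thus a $d$-cycle supported on facets of the pseudomanifold $\M$, and strong connectivity of $\M$ forces such a cycle to be either $0$ or supported on \emph{all} facets of $\M$. Hence exactly one component $K_0$ has $\partial\theta_{K_0}\ne 0$, and the simplices in $K_0$ together cover all $m+1$ vertices of $\M$. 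Now run an easy induction on $|K_0|$: a single $(d+1)$-simplex spans $d+2$ vertices, and since a connected graph with at least one edge has a non-cut vertex, one may peel off a simplex sharing a facet with the rest, losing at most one vertex from the union. Therefore $|\supp\theta|\ge |K_0|\ge (m+1)-(d+1)=m-d$.
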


The simplicial complexes are all abstract, the minimal dimension of a simplex is $0$ (i.e. the empty set is not a simplex), and for a simplicial complex $\X$ and any nontrivial abelian group $A$, we say that $\H_{-1}(\X,A)=0$ if and only if $\X$ is nonempty. When we use the word ``collection'', it means that repetition is allowed and the cardinality is counted with the repetitions.

 Theorems~\ref{thmixed} and~\ref{thm:homol_sperner_gen} are respectively proved in Sections~\ref{sec:mixed} and~\ref{sec:homol_sperner_gen}. Theorem~\ref{thmixed} is proved by induction. The base case, which is the case $k=-1$ (``nerve theorem for unions''), is proved in Section~\ref{sec:union}. Applications of Theorem~\ref{thmixed} are proposed in Section~\ref{sec:appli}. Some of these applications are new generalizations of Meshulam's lemma.

\section{A nerve theorem for unions}\label{sec:union}

In this section, we prove the following nerve theorem for unions.

\begin{theorem} \label{thmunion}
Consider a simplicial complex $\X$ with a finite collection $\Gamma$ of subcomplexes such that $\bigcup\Gamma=\X$.
Let $\ell$ be an integer such that $-1\leq\ell<|\Gamma|$.
Suppose that $\H_{|\sigma |-2} \big(\bigcup\sigma,\F\big)=0$ for every $\sigma\in\N(\Gamma)$ of dimension at most $\ell$. Then $\dim\H_{\ell}(\N(\Gamma),\F)\leq\dim\H_{\ell}(\X,\F)$.
\end{theorem}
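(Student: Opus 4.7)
The plan is to construct an injective linear map $\phi_*\colon \H_\ell(\N(\Gamma);\F)\hookrightarrow \H_\ell(\X;\F)$, which yields the dimension inequality immediately. I will build a partial chain map $\phi\colon C_*(\N(\Gamma);\F)\to C_*(\X;\F)$ in degrees at most $\ell$ and a chain map $\psi\colon C_*(\X;\F)\to C_*(\N(\Gamma);\F)$ via the acyclic carrier theorem. With coherent choices, $\psi\circ\phi$ will be chain-homotopic to the identity on $C_{\le\ell}(\N(\Gamma);\F)$, forcing $\phi_*$ to be injective on $\H_\ell$.

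The chain map $\phi$ is constructed by induction on $p=\dim\sigma$, maintaining the invariant $\phi([\sigma])\in C_p(\bigcup\sigma;\F)$ for $p\le\ell$. I set $\phi([\{i\}])=v_i$ for any vertex $v_i\in A_i$. For $1\le p\le\ell$, the chain $\phi(\partial[\sigma])$ is a $(p-1)$-cycle in $\bigcup\sigma$ by the chain-map property inherited from smaller dimensions, and the union hypothesis $\H_{p-1}(\bigcup\sigma;\F)=0$ lets me pick $\phi([\sigma])\in C_p(\bigcup\sigma;\F)$ with $\partial\phi([\sigma])=\phi(\partial[\sigma])$; this is the only place the hypothesis is used. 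For $\psi$, note that for every simplex $\delta$ of $\X$ the set $I(\delta):=\{i:\delta\in A_i\}$ is a simplex of $\N(\Gamma)$ (the vertices of $\delta$ lie in each $A_i$ with $i\in I(\delta)$), its closed star $\overline{\operatorname{st}}_{\N(\Gamma)}(I(\delta))$ is a cone, hence acyclic, and $\delta'\subseteq\delta$ implies $I(\delta)\subseteq I(\delta')$ and hence $\overline{\operatorname{st}}(I(\delta))\subseteq\overline{\operatorname{st}}(I(\delta'))$; the acyclic carrier theorem then produces $\psi$ with $\psi(\delta)\in C_*(\overline{\operatorname{st}}(I(\delta)))$. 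Setting $\psi(v_i)=[\{i\}]$ and propagating compatible choices through both inductions, I would argue that both $\id$ and $\psi\circ\phi$ are carried by the acyclic carrier $\sigma\mapsto\overline{\operatorname{st}}_{\N(\Gamma)}(\sigma)$; the acyclic carrier theorem would then yield $\psi\circ\phi\simeq\id$ on $C_{\le\ell}(\N(\Gamma);\F)$, whence $\psi_*\phi_*=\id$ on $\H_\ell$ and the conclusion follows.

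The main obstacle is this coherence step. A simplex $\delta$ in the support of $\phi([\sigma])$ satisfies only $I(\delta)\cap\sigma\neq\emptyset$, not $\sigma\subseteq I(\delta)$, so the natural carrier $\overline{\operatorname{st}}(I(\delta))$ for $\psi(\delta)$ need not lie inside $\overline{\operatorname{st}}(\sigma)$; forcing $\psi(\delta)$ into the smaller common subcomplex $\overline{\operatorname{st}}(\sigma)\cap\overline{\operatorname{st}}(I(\delta))$ and verifying acyclicity there in the degrees relevant to the induction is exactly where a second use of the union hypothesis would be required. A possibly cleaner alternative is a spectral-sequence argument on the double complex $E^{p,q}=\bigoplus_{\dim\sigma=p}C_q(\bigcup\sigma;\F)$ with horizontal \v{C}ech differential induced by the inclusions $\bigcup\tau\hookrightarrow\bigcup\sigma$ and vertical simplicial boundary: the hypothesis kills the diagonal entries $E^{p,p-1}$ on the $E^1$-page of the vertical-first spectral sequence for $p\le\ell$, and a careful comparison of the two resulting spectral sequences should recover the inequality without the chain-level bookkeeping.
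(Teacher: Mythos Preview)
Your strategy is essentially the paper's: build a chain map from the nerve into $\X$ using the union hypothesis, build a map back, and show the round trip is chain-homotopic to the identity via an acyclic carrier. Your $\phi$ is the paper's $f_{\sharp}$, and the obstacle you isolate is exactly the point where the argument has to be finished. So the proposal is on the right track but stops at the hard step.

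The gap you name is genuine, and neither of your suggested fixes closes it. Forcing $\psi(\delta)$ into $\overline{\operatorname{st}}(\sigma)\cap\overline{\operatorname{st}}(I(\delta))$ is not available: $\psi$ is a single chain map on $C_*(\X)$, and the same $\delta$ can lie in the support of $\phi([\sigma])$ for many different $\sigma$, so its image cannot simultaneously sit in all those intersections. Nor is a ``second use of the union hypothesis'' what is missing; the paper settles this step by pure combinatorics. The key move is to pass to barycentric subdivisions. The assignment $\tau\mapsto I(\tau)=\{\A\in\Gamma:\tau\in\A\}$ is order-reversing, hence an honest simplicial map $\lambda\colon\sd\X\to\sd\N(\Gamma)$ (not $\X\to\N(\Gamma)$). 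The paper then compares $\lambda_{\sharp}\circ f_{\sharp}$ not with $\id$ but with the subdivision operator $\sd_{\sharp}\colon\C(\N^{\ell}(\Gamma))\to\C(\sd\N(\Gamma))$, and uses the carrier
\[
\Phi(\sigma)=\triangle\{\sigma'\in\N(\Gamma):\sigma'\cap\sigma\neq\varnothing\}\subseteq\sd\N(\Gamma),
\]
which is contractible by the order-homotopy lemma (the map $\sigma'\mapsto\sigma'\cap\sigma$ lies below both $\id$ and the constant map to $\sigma$). Both $\lambda_{\sharp}\circ f_{\sharp}$ and $\sd_{\sharp}$ are carried by $\Phi$---the first precisely because $I(\delta)\cap\sigma\neq\varnothing$ for $\delta$ in the support of $f_{\sharp}(\sigma)$, which is exactly the condition you flagged as insufficient for your carrier but is exactly what $\Phi$ accepts. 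Since $\sd_*$ is an isomorphism on homology, this yields injectivity of $(f_{\sharp})_*$ on $\H_{\ell}$. The subdivision is not cosmetic: it is what produces a carrier that is acyclic for free, with no further appeal to the hypothesis.

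On the spectral-sequence alternative: the bicomplex $\bigoplus_{\dim\sigma=p}C_q(\bigcup\sigma)$ with \v{C}ech differential along the inclusions $\bigcup\tau\hookrightarrow\bigcup\sigma$ is cohomological in $p$ and lacks the exact augmentation row that, in the intersection version, identifies one spectral sequence with $\H_*(\X)$. The rows here are not exact in general, so the ``horizontal-first'' page does not collapse to $\H_*(\X)$, and the comparison you sketch does not go through without additional input equivalent to the carrier argument above.
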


This is the special case of Theorem~\ref{thmixed} when $k=-1$ and it will be used in Section~\ref{sec:mixed} to prove this latter theorem in its full generality.

If $\ell=-1$, the proof is easy: if $\X$ is nonempty, then $\N(\Gamma)$ is nonempty as well since $\Gamma$ covers $\X$. 
Let us thus consider the case where $\ell\geq 0$. The general structure of the proof, in particular the use of a carrier argument, shares similarities with the proof Bj\"orner proposed for his generalization of the nerve theorem~\cite{Bjo}.


We denote by $\N^{\ell}(\Gamma)$ the $\ell$-skeleton of $\N(\Gamma)$.

\begin{lemma}\label{lem:f}
There exists an augmentation-preserving chain map $f_{\sharp}\colon \C(\N^{\ell}(\Gamma),\F) \rightarrow \C(\sd\X,\F)$ such that for any $s$-dimensional simplex $\sigma\in\N(\Gamma)$ with $0\leq s\leq \ell$, the chain $f_{\sharp}(\sigma)$ is carried by $\sd\bigcup\sigma$.
\end{lemma}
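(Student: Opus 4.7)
The plan is to construct $f_{\sharp}$ inductively on the dimension of simplices of $\N^{\ell}(\Gamma)$, in the spirit of the standard proofs based on the acyclic carrier theorem, but exploiting the degree-specific vanishing hypothesis one dimension at a time. The carrier to keep in mind is $\Phi(\sigma) := \sd\bigcup\sigma$, which is visibly monotone in $\sigma$.

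For the base case $s=0$, each vertex $\{A\}$ of $\N^{\ell}(\Gamma)$ is a subcomplex $A\in\Gamma$. Applying the hypothesis with $|\sigma|=1$ gives $\H_{-1}(A,\F)=0$, which by the paper's convention means that $A$ is nonempty. Pick any vertex $v_{A}$ of $\sd A$ and set $f_{\sharp}(\{A\}):=v_{A}$. This makes $f_{\sharp}$ augmentation-preserving in degree $0$, and $f_{\sharp}(\{A\})$ is carried by $\sd A=\Phi(\{A\})$.

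For the inductive step, assume $f_{\sharp}$ has been defined on $\C_{s-1}(\N^{\ell}(\Gamma),\F)$ for some $1\leq s\leq \ell$ in a way that commutes with $\partial$ and obeys the carrier condition. For an $s$-simplex $\sigma$ of $\N(\Gamma)$, let $c:=f_{\sharp}(\partial\sigma)$. By monotonicity of $\Phi$, the chain $c$ lies in $\C_{s-1}(\sd\bigcup\sigma,\F)$. Moreover $c$ is a reduced $(s-1)$-cycle: if $s\geq 2$, then $\partial c=f_{\sharp}(\partial^{2}\sigma)=0$ by the inductive chain-map property, while if $s=1$, $c$ is the difference of two vertices of augmentation $1$ and hence has augmentation $0$. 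The hypothesis gives $\H_{s-1}(\bigcup\sigma,\F)=\H_{|\sigma|-2}(\bigcup\sigma,\F)=0$, and since $\sd\bigcup\sigma$ and $\bigcup\sigma$ share the same reduced homology, $c$ is the boundary of some $b_{\sigma}\in \C_{s}(\sd\bigcup\sigma,\F)$. Set $f_{\sharp}(\sigma):=b_{\sigma}$ and extend $\F$-linearly to $\C_{s}$.

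The main obstacle is that the hypothesis supplies vanishing of only one homology degree for each union $\bigcup\sigma$, so the classical acyclic carrier theorem cannot be invoked as a black box. The care lies entirely in the numerology $s-1=|\sigma|-2$, which matches the degree of the cycle one needs to fill at step $s$ with the degree in which the hypothesis guarantees vanishing; this is precisely what allows the induction to climb one dimension at a time. A minor but genuine technicality is the case $s=1$, where reduced versus unreduced $\H_{0}$ matters and the argument relies explicitly on the augmentation-preservation secured in the base case.
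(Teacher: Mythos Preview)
Your proposal is correct and follows essentially the same approach as the paper: an induction on the dimension $s$, choosing a vertex in each nonempty $\A$ for the base case, and using the single vanishing degree $\H_{|\sigma|-2}(\bigcup\sigma,\F)=0$ to fill the cycle $f_{\sharp}(\partial\sigma)$ in $\sd\bigcup\sigma$ at the inductive step. You are slightly more explicit than the paper about the $s=1$ case and about why the acyclic carrier theorem cannot be applied as a black box, but the argument is the same.
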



\begin{proof}
Given a $0$-dimensional simplex $\{\A\}$ of $\N(\Gamma)$, the subcomplex is nonempty by definition (condition for $|\sigma|=1$) and there exists thus a vertex $v_{\A}$ in $\A$. We define $f_{\sharp}(\{\A\})$ to be $\{v_{\A}\}$. Note that $\{v_{\A}\}$ is a vertex of $\sd\A$. Suppose now that $f_{\sharp}(z)$ has been defined for every chain $z\in C_i(\N^{\ell}(\Gamma),\F)$ for $i$ up to $s-1<\ell$ and satisfies $\partial f_{\sharp}(z)=f_{\sharp}(\partial z)$ (where we use the augmentation map if $z$ is a $0$-chain). Suppose moreover that $f_{\sharp}(\sigma)\in C_i\left(\sd\bigcup\sigma,\F\right)$ for every $i$-dimensional simplex $\sigma\in\N(\Gamma)$ for $i$ up to $s-1$. Consider an $s$-dimensional simplex $\sigma$ of $\N(\Gamma)$. The chain $f_{\sharp}(\partial\sigma)$ has been defined, it belongs to $C_{s-1}\left(\sd\bigcup\sigma,\F\right)$ and $\partial f_{\sharp}(\partial\sigma)=0$. Since $s\leq \ell$, we have $\widetilde{H}_{\dim\sigma-1}\left(\bigcup\sigma,\F\right)=0$ and there exists a chain in $C_s\left(\sd\bigcup\sigma,\F\right)$ whose boundary is $f_{\sharp}(\partial\sigma)$. We define $f_{\sharp}(\sigma)$ to be this chain.
\end{proof}

For any simplex $\tau$ of $\X$, we set $\lambda(\tau)=\{\A\in\Gamma\colon \tau\in\A\}$.  

\begin{lemma}\label{lem:lambda}
The map $\lambda$ is a simplicial map $\sd\X\rightarrow\sd\N(\Gamma)$.
\end{lemma}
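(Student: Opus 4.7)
The plan is to check the two defining conditions for $\lambda$ to be a simplicial map: first, that each vertex of $\sd\X$ is sent to a vertex of $\sd\N(\Gamma)$, and second, that each simplex of $\sd\X$ is sent to a simplex of $\sd\N(\Gamma)$. Since the vertices of $\sd\N(\Gamma)$ are exactly the (nonempty) simplices of $\N(\Gamma)$, and the simplices of $\sd\N(\Gamma)$ are the strictly decreasing (or increasing) chains of such simplices under inclusion, both conditions reduce to elementary set-theoretic checks.

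For the vertex condition, I would argue that for any simplex $\tau$ of $\X$, the set $\lambda(\tau)\subseteq\Gamma$ is a simplex of $\N(\Gamma)$. Nonemptiness follows from $\bigcup\Gamma=\X$, which forces $\tau$ to belong to at least one $\A\in\Gamma$. The defining property of $\N(\Gamma)$ — a subcollection is a simplex iff it has a nonempty intersection — holds because $\tau$ itself lies in $\bigcap\lambda(\tau)$ by construction.

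For the simplex condition, the key monotonicity observation is that each $\A\in\Gamma$ is a subcomplex of $\X$, hence closed under taking faces. Therefore, whenever $\tau\subseteq\tau'$ are simplices of $\X$, any $\A$ containing $\tau'$ automatically contains $\tau$, giving the inclusion $\lambda(\tau')\subseteq\lambda(\tau)$. Consequently, a simplex of $\sd\X$, which by definition is a chain $\tau_0\subsetneq\tau_1\subsetneq\cdots\subsetneq\tau_s$ of simplices of $\X$, is mapped by $\lambda$ to a weakly decreasing chain $\lambda(\tau_0)\supseteq\lambda(\tau_1)\supseteq\cdots\supseteq\lambda(\tau_s)$ of simplices of $\N(\Gamma)$. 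After discarding repetitions this becomes a strict chain, and hence a simplex of $\sd\N(\Gamma)$, which is precisely what is required.

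There is essentially no obstacle here; the statement is a standard bookkeeping lemma about the behavior of ``carrier'' maps on barycentric subdivisions, and the only subtlety worth stating explicitly is the need to remove repetitions from the image chain before reading it off as a simplex of $\sd\N(\Gamma)$ — which is consistent with the fact that simplicial maps are not required to be injective on vertex sets.
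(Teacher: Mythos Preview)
Your proof is correct and follows the same idea as the paper's: both rest on the observation that $\tau'\subseteq\tau$ implies $\lambda(\tau)\subseteq\lambda(\tau')$, i.e.\ that $\lambda$ is an order-reversing poset map and hence induces a simplicial map between the order complexes $\sd\X$ and $\sd\N(\Gamma)$. The paper's proof is a two-line sketch of exactly this, while you spell out the details (including why $\lambda(\tau)$ is a nonempty simplex of $\N(\Gamma)$) more carefully.
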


\begin{proof}
Let $\tau$ be a simplex of $\X$ and $\tau'$ any subset of $\tau$. We obviously have $\lambda(\tau)\subseteq\lambda(\tau')$. The map $\lambda$ reverses the order in the posets.
\end{proof}

We introduce now the chain map $\sd_{\sharp}\colon\C(\N(\Gamma),\F)\rightarrow\C(\sd\N(\Gamma),\F)$. Given $\sigma\in\N(\Gamma)$ of dimension $i$, the simplicial complex $\sd\sigma$ is a triangulation of $\sigma$ and $\sd_{\sharp}(\sigma)$ is the formal sum of all $i$-dimensional simplices of $\sd\sigma$, with the orientations induced by that of $\sigma$. It is well-known and easy to check that it is a chain map. We denote by $\sd_{\sharp}^{\ell}$ the restriction of $\sd_{\sharp}$ to $\C(\N^{\ell}(\Gamma),\F)$. Both $\sd_{\sharp}$ and $\sd_{\sharp}^{\ell}$ are augmentation-preserving.

We are going to show that there is a chain homotopy between $\lambda_{\sharp}\circ f_{\sharp}$ and $\sd_{\sharp}^{\ell}$. This will be done with the help of the {\em acyclic carrier theorem}, which we state here for sake of completeness.

An {\em acyclic carrier} from a simplicial complex $\K$ to a simplicial complex $\L$ is a function $\Psi$ that assigns to each simplex $\sigma$ in $\K$ a subcomplex $\Psi(\sigma)$ of $\L$ such that
\begin{itemize}
\item $\Psi(\sigma)$ is nonempty and acyclic
\item If $\tau$ is a face of $\sigma$, then $\Psi(\tau)\subseteq\Psi(\sigma)$.
\end{itemize}
A chain map $\mu\colon\C(\K,\F)\rightarrow\C(\L,\F)$ is {\em carried} by $\Psi$ if for each simplex $\sigma$, the chain $\mu(\sigma)$ is carried by the subcomplex $\Psi(\sigma)$ of $\L$.

\begin{theorem-cite}[Acyclic carrier theorem -- short version]
Let $\Psi$ be an acyclic carrier from $\K$ to $\L$. If $\phi$ and $\psi$ are two augmentation-preserving chain maps from $\C(\K,\F)$ to $\C(\L,\F)$ that are carried by $\Psi$, then $\phi$ and $\psi$ are chain-homotopic.
\end{theorem-cite}

The acyclic carrier theorem is usually stated for coefficients in $\Z$, and in a more general form~\cite[Theorem 13.3]{Mun}. The proof in this latter reference applies to arbitrary coefficients as observed by Segev~\cite[(1.2) p.667]{Seg93}.


In order to apply the acyclic carrier theorem, we define for each simplex $\sigma\in\N(\Gamma)$ the subcomplex $\Phi(\sigma)=\triangle\{\sigma'\in\N(\Gamma)\colon\sigma'\cap\sigma\neq\varnothing\}$. We denote by $\triangle(P)$ the order complex associated to a poset $P$.

\begin{lemma}\label{lem:carrier}
The map $\Phi$ is an acyclic carrier from $\N^{\ell}(\Gamma)$ to $\sd\N(\Gamma)$.
\end{lemma}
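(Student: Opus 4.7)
My plan is to verify the two defining properties of an acyclic carrier, and the only one of substance is the acyclicity. Face-monotonicity is immediate: if $\tau$ is a face of $\sigma$ in $\N(\Gamma)$, then for any $\sigma'$ the implication $\sigma' \cap \tau \neq \varnothing \Rightarrow \sigma' \cap \sigma \neq \varnothing$ is obvious, so the underlying poset $P_\sigma := \{\sigma' \in \N(\Gamma) : \sigma' \cap \sigma \neq \varnothing\}$ satisfies $P_\tau \subseteq P_\sigma$, and hence $\Phi(\tau) = \triangle(P_\tau) \subseteq \triangle(P_\sigma) = \Phi(\sigma)$. Nonemptiness is equally trivial since $\sigma$ itself belongs to $P_\sigma$.

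For the acyclicity of $\Phi(\sigma) = \triangle(P_\sigma)$ I would prove the stronger statement of contractibility, by exhibiting an explicit poset deformation retraction of $P_\sigma$ onto the face poset of the simplex $\sigma$. The retraction is
\[
c \colon P_\sigma \longrightarrow P_\sigma, \qquad c(\sigma') := \sigma' \cap \sigma.
\]
The one point to verify carefully is that $c$ is well defined; this is where the fact that $\sigma$ is a simplex of $\N(\Gamma)$ is essential. Since $\bigcap \sigma \neq \varnothing$, every nonempty subcollection $\sigma'' \subseteq \sigma$ satisfies $\bigcap \sigma'' \supseteq \bigcap \sigma \neq \varnothing$, so $\sigma''$ is itself a simplex of $\N(\Gamma)$. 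Applied to $\sigma'' = \sigma' \cap \sigma$, which is nonempty precisely because $\sigma' \in P_\sigma$, this gives $c(\sigma') \in \N(\Gamma)$, and then $c(\sigma') \cap \sigma = c(\sigma')$ confirms $c(\sigma') \in P_\sigma$.

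Clearly $c$ is order-preserving, satisfies $c(\sigma') \subseteq \sigma'$, and is idempotent (because $c(\sigma') \subseteq \sigma$, so intersecting with $\sigma$ a second time changes nothing). The standard order-homotopy argument for idempotent poset maps dominated by the identity then provides a deformation retraction of $\triangle(P_\sigma)$ onto $\triangle(c(P_\sigma))$. Finally, $c(P_\sigma)$ equals the poset of all nonempty subsets of the set $\sigma$, whose order complex is the barycentric subdivision $\sd\sigma$ of a geometric simplex, and is therefore contractible. Hence $\Phi(\sigma)$ is contractible, a fortiori acyclic. The only real obstacle in executing the plan is keeping careful track that every poset element produced in the argument remains in $\N(\Gamma)$, which is forced throughout by the hypothesis $\sigma \in \N(\Gamma)$; everything else is bookkeeping.
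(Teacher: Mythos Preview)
Your proof is correct and follows essentially the same route as the paper: both arguments use the order-preserving map $\sigma' \mapsto \sigma' \cap \sigma$ together with the order homotopy lemma to show that $\Phi(\sigma)$ is contractible. The only cosmetic difference is that the paper phrases the conclusion as ``identity $\simeq$ constant map at $\sigma$'' via the chain $g \leq \id$, $g \leq h$ (with $h \equiv \sigma$), whereas you phrase it as a deformation retraction onto $\sd\sigma$; these are the same argument.
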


\begin{proof}
Let $\sigma$ be a simplex of $\N^{\ell}(\Gamma)$. The simplicial complex $\Phi(\sigma)$ is nonempty, and for any subset $\omega$ of $\sigma$, we have $\Phi(\omega)\subseteq\Phi(\sigma)$. The only thing that remains to be proved is thus the fact that $\Phi(\sigma)$ is acyclic. Actually, we have more: it is contractible. To see this, consider the following two simplicial maps $g,h:\Phi(\sigma)\rightarrow\Phi(\sigma)$ defined for any vertex $\sigma'\in V(\Phi(\sigma))$ by $g(\sigma')=\sigma'\cap\sigma$ and by $h(\sigma')=\sigma$ (constant map). Seeing $\Phi(\sigma)$ as the poset $\left(\{\sigma'\in\N(\Gamma)\colon\sigma'\cap\sigma\neq\varnothing\},\subseteq\right)$, we have $g\leq\id$ and $g\leq h$. By the order homotopy lemma~\cite[Lemma C.3]{Lon}, the maps $h$ and $\id$ are homotopic, which means that the identity map is homotopic to the constant map.
\end{proof}

\begin{proof}[Proof of Theorem~\ref{thmunion}] We first check that both $\lambda_{\sharp}\circ f_{\sharp}$ and $\sd_{\sharp}^{\ell}$ are carried by $\Phi$. Take $\sigma\in\N^{\ell}(\Gamma)$ of dimension $s$. Any simplex of $\sd\X$ in the support of $f_{\sharp}(\sigma)$ is of the form $\{\tau_0,\ldots,\tau_s\}$ with $\tau_0\subseteq\cdots\subseteq\tau_s$ and $\tau_i\in\bigcup_{\A\in\sigma}\A$ for all $i\in\{0,\ldots,s\}$ (see Lemma~\ref{lem:f}). In particular, $\lambda(\tau_i)\cap\sigma\neq\varnothing$ for all $i$ and $\{\lambda(\tau_0),\ldots,\lambda(\tau_s)\}$ is a simplex of $\triangle\{\sigma'\in\N(\Gamma)\colon\sigma'\cap\sigma\neq\varnothing\}$. It shows that $\lambda_{\sharp}\circ f_{\sharp}$ is carried by $\Phi$. Any simplex in the support of $\sd_{\sharp}^{\ell}(\sigma)$ is of the form $\{\sigma_0,\ldots,\sigma_s\}$ with $\sigma_0\subseteq\cdots\subseteq\sigma_s=\sigma$ and $\sigma_i\cap\sigma=\sigma_i\neq\varnothing$. Thus $\{\sigma_0,\ldots,\sigma_s\}$ is a simplex of $\triangle\{\sigma'\in\N(\Gamma)\colon\sigma'\cap\sigma\neq\varnothing\}$ and $\sd_{\sharp}^{\ell}$ is carried by $\Phi$.

Since both $\lambda_{\sharp}\circ f_{\sharp}$ and $\sd_{\sharp}^{\ell}$ preserve augmentation (here, we use the fact that $f_{\sharp}$ is augmentation-preserving -- see Lemma~\ref{lem:f}), we can apply the acyclic carrier theorem given above and there is a chain homotopy $D$ between $\lambda_{\sharp}\circ f_{\sharp}$  and $\sd_{\sharp}^{\ell}$. This chain homotopy is now used to conclude the proof.

For a simplicial complex $\K$, we denote by $Z_{\ell}(\K,\F)$ the cycle subspace of $C_{\ell}(\K,\F)$. Consider the map $j\colon  Z_{\ell}(\N(\Gamma),\F)\rightarrow Z_{\ell}(\N^{\ell}(\Gamma),\F)$ defined by $j(z)=z$ and the map
$$\begin{array}{rccc}\phi\colon & Z_{\ell}(\N(\Gamma),\F) & \longrightarrow & \H_{\ell}(\sd\X,\F) \\ & z & \longmapsto & [f_{\sharp}(j(z))],\end{array}$$ where the square brackets denote the homology class. The map $\phi$ is linear. The end of the proof consists in checking that $\Ker(\phi)$ is a subspace of $B_{\ell}(\N(\Gamma),\F)$, the boundary subspace of $C_{\ell}(\N(\Gamma),\F)$. From this, the conclusion follows immediately: 
\begin{align*}
\dim\H_{\ell}(\N(\Gamma),\F) & =  \dim Z_{\ell}(\N(\Gamma),\F)-\dim B_{\ell}(\N(\Gamma),\F) \\ & \leq  \dim Z_{\ell}(\N(\Gamma),\F)-\dim \Ker(\phi)
\\ & =  \dim \Ima(\phi) \\ 
& \leq  \dim\H_{\ell}(\sd\X,\F).
\end{align*}

Let $z$ be an element of $\Ker(\phi)$. It is such that $f_{\sharp}(j(z))=\partial c$ for some $c$ in $C_{\ell+1}(\sd\X,\F)$. By definition of $D$, we have $\partial D(j(z))+D(\partial j(z))=(\lambda_{\sharp}\circ f_{\sharp})(j(z))-\sd_{\sharp}^{\ell}(j(z))$, and hence $\partial D(j(z))=\partial \lambda_{\sharp}(c)-\sd_{\sharp}^{\ell}(j(z))$ since $j(z)=z$ is a cycle. Moreover, we have $\sd_{\sharp}^{\ell}(j(z))=\sd_{\sharp}(z)$. Therefore, $\sd_{\sharp}(z)=\partial(\lambda_{\sharp}(c)-D(j(z)))$. The algebraic subdivision theorem \cite[Theorem 17.2]{Mun} ensures that $\sd_*$ is an isomorphism for coefficients in $\Z$, and thus for coefficients in $\F$ (\cite[Theorem 51.1]{Mun}), which implies that $z$ is an element of $B_{\ell}(\N(\Gamma),\F)$.
\end{proof}

A version of Theorem~\ref{thmunion} where $\F$ is replaced by any finitely generated abelian group holds and can be obtained along the same lines. The conclusion in the theorem is no longer an inequality between the dimensions of the homology groups, which are then vector spaces, but between their ranks.

\section{The mixed nerve theorem}\label{sec:mixed}

The purpose of this section is to prove Theorem \ref{thmixed}. We will proceed by induction on $k$. The base case is given by Theorem~\ref{thmunion} which is the special case when $k=-1$ (only unions are considered), proved in Section~\ref{sec:union}. A crucial ingredient in the induction is a lemma that shows how to make homology of a simplicial complex vanish up to some dimension $d$, while keeping homology unchanged beyond $d$, by attaching simplices of dimension at most $d$. 

\begin{lemma}  \label{lem:killing}
Given a simplicial complex $\K$, there always exists a way to attach simplices of dimension at most $d$ to $\K$ to get a simplicial complex $\K'$ such that
\begin{itemize}
\item $\H_i(\K',\F)=0$ for all $i\leq d-1$.
\item $\H_i(\K',\F)\cong\H_i(\K,\F)$ for all $i\geq d$.
\end{itemize}
\end{lemma}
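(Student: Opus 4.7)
The plan is to construct $\K'$ by an inductive attaching procedure that kills $\H_i(\K,\F)$ for $i=0,1,\ldots,d-1$ one dimension at a time while preserving $\H_j(\K,\F)$ for $j\ge d$; this is a simplicial analogue of the classical Hurewicz--Postnikov ``killing'' construction. Setting $\K_0=\K$, I build a chain $\K_0\subseteq\K_1\subseteq\cdots\subseteq\K_d=\K'$, where $\K_{i+1}$ is obtained from $\K_i$ by attaching simplices of dimension at most $i+1\le d$. The invariant I maintain is $\H_j(\K_i,\F)=0$ for $j<i$ and $\H_j(\K_i,\F)\cong \H_j(\K,\F)$ for $j\ge i$. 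The base case $i=0$ is elementary: adjoin a single new vertex if $\K$ is empty, and add enough edges to make the complex connected.

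At the inductive step, I pick a basis $[c_1],\ldots,[c_k]$ of $\H_i(\K_i,\F)$. For each $c_r$, I attach an $\F$-acyclic simplicial complex $D_r$ of dimension $i+1$ along a subcomplex $S_r\subseteq \K_i\cap D_r$ having the $\F$-homology of an $i$-sphere, chosen so that the fundamental class of $S_r$ maps to $[c_r]$ under $S_r\hookrightarrow\K_i$. The Mayer--Vietoris sequence applied to $A=\K_i$, $B=D_r$, $A\cap B=S_r$, combined with $\H_*(D_r,\F)=0$ and $\H_*(S_r,\F)\cong \H_*(S^i,\F)$, yields $\H_j(\K_i\cup D_r,\F)\cong \H_j(\K_i,\F)$ for every $j\neq i$, while $[c_r]$ becomes a boundary. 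Iterating over $r$ kills $\H_i(\K_i,\F)$ and produces $\K_{i+1}$.

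The main obstacle is the simplicial realization of each pair $(D_r,S_r)$ representing a given cycle $c_r$. Over a field $\F$ such a realization always exists, but building it is the technical heart of the argument: one can either pass to a sufficiently fine barycentric subdivision of $\K_i$ and apply simplicial approximation to a continuous sphere representative of $[c_r]$, or carry out a direct combinatorial cone-and-surgery construction on the support of $c_r$ in $\K_i$, enlarging it into an $\F$-acyclic-outside-dimension-$i$ subcomplex and then coning it off with a fresh vertex. Once these realizations are in hand for every basis class at each step, concatenating the attachments over $i=0,\ldots,d-1$ yields $\K'$ in which every added simplex has dimension at most $d$, $\H_i(\K',\F)=0$ for $i\le d-1$, and $\H_i(\K',\F)\cong \H_i(\K,\F)$ for $i\ge d$.
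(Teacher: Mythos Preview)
Your overall plan --- kill $\H_i$ one dimension at a time by attaching $(i{+}1)$-dimensional ``disks'' along ``spheres'' --- is a natural strategy, but there is a genuine gap precisely where you flag ``the technical heart.'' Your Mayer--Vietoris step needs each $S_r\subseteq\K_i$ to be $i$-dimensional with $\H_*(S_r,\F)\cong\H_*(S^i,\F)$, and neither of your two proposed constructions delivers this in general.

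Option (a) fails outright: a class $[c_r]\in\H_i(\K_i,\F)$ need not be representable by any continuous map from $S^i$. For instance, with $\F=\F_2$ and $\K_i$ a triangulation of $\mathbb{RP}^3$, one has $\H_2(\K_i,\F_2)=\F_2$ while $\pi_2(\mathbb{RP}^3)=0$, so there is simply no sphere map to approximate. Option (b) is not a construction but a restatement of the problem: ``enlarging $\supp(c_r)$ into an $\F$-acyclic-outside-dimension-$i$ subcomplex'' of dimension at most $i$ is essentially the lemma itself with $d$ replaced by $i$, and you have not set up an induction on $d$ that would license invoking it. The stakes are real: without $\H_{i-1}(S_r,\F)=0$ the Mayer--Vietoris sequence shows that coning off $S_r$ can \emph{enlarge} $\H_i$ rather than shrink it, and without injectivity of $\H_i(S_r,\F)\to\H_i(\K_i,\F)$ it can enlarge $\H_{i+1}$, destroying the invariant you need for $j\ge d$.

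The paper bypasses all of this with a short, non-inductive argument. Regard $\K$ as a subcomplex of the simplex $\Delta_{n-1}$ on its own vertex set; adjoin the full $(d{-}1)$-skeleton of $\Delta_{n-1}$ (this instantly forces $\H_j=0$ for $j\le d-2$ and cannot touch $\H_j$ for $j\ge d$); then add $d$-simplices of $\Delta_{n-1}$ greedily, one at a time, as long as $\H_j$ for $j\ge d$ stays isomorphic to $\H_j(\K)$. If the resulting $\K'$ still had $\H_{d-1}(\K',\F)\neq 0$, there would be a missing $d$-simplex $\sigma$ with $[\partial\sigma]\neq 0$; but then adjoining $\sigma$ creates no new $d$-cycle (any $d$-cycle $z'+\alpha\sigma$ forces $\alpha\partial\sigma=-\partial z'$, hence $\alpha=0$ since $\F$ is a field and $\partial\sigma$ is not a boundary), contradicting maximality. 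No spheres, no Mayer--Vietoris, no carrier of a homology class is ever needed.
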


\begin{proof}
Let $n$ be the number of vertices of $\K$. We see $\K$ as a subcomplex of the standard $(n-1)$-dimensional simplex $\Delta_{n-1}$. Let $\K'$ be a simplicial complex obtained by adding to $\K$ the full $(d-1)$-skeleton $\Delta^{(d-1)}_{n-1}$ of $\Delta_{n-1}$ together with as many $d$-dimensional simplices of $\Delta_{n-1}$ as possible, so that $\H_i(\K',\F)\cong\H_i(\K,\F)$ for $i\geq d$. We claim that $\H_i(\K',\F)=0$ for $i\leq d-1$. This is clear for $i\leq d-2$ since $\K'\supseteq\Delta^{(d-1)}_{n-1}$. Suppose that $\H_{d-1}(\K',\F)\neq 0$. Then there exists a $d$-dimensional simplex $\sigma\in\Delta_{n-1}$ whose boundary $\partial\sigma$ is a nontrivial $(d-1)$-cycle of $\K'$. Let $\K''=\K'\cup\{\sigma\}$. Then any $d$-cycle of $\K''$ is also a $d$-cycle of $\K'$: if $z$ is a $d$-cycle of $\K''$, we can write $z$ as the sum $z'+\alpha \sigma$ with $z'\in C_d(\K',\F)$ and $\alpha\in\F$; then $\alpha\partial\sigma=-\partial z'$, which implies that $\alpha=0$ since $\partial\sigma$ is a nontrivial $(d-1)$-cycle of $\K'$ (here we use that $\F$ is a field). Thus, $\H_i(\K'',\F)\cong\H_i(\K',\F)\cong\H_i(\K,\F)$ for $i\geq d$ and $\H_i(\K'',\F)=0$ for $i\leq d-2$, contradicting the maximality of $\K'$. 
\end{proof}

Lemma~\ref{lem:killing} does not hold for integral homology. For example, if $\K$ is a triangulation of the real projective plane, then for any $2$-dimensional simplicial complex $\L$, either $\H_2(\K\cup\L,\Z)\neq 0=\H_2(\K,\Z)$, or $\H_1(\K\cup\L,\Z)\neq 0$. Indeed, if $\H_2(\K\cup\L,\Z)\cong\H_1(\K\cup\L,\Z)=0$, then the long exact sequence of the pair $(\K\cup\L,\K)$ implies that $\H_2(\K\cup\L,\Z)\cong\H_1(\K\cup\L,\Z)=\Z_2$, in contradiction with the fact that $\H_2(\K\cup\L,\K,\Z)$ is free (since $\K$ and $\L$ are $2$-dimensional).

For the proof of Theorem \ref{thmixed}, we also need the following technical  lemma.

\begin{lemma}  \label{lem:aux}
Consider a simplicial complex $\X$. Suppose given a nonempty finite collection $\sigma$ of subcomplexes such that $\H_{|\sigma|-|\tau|-1}(\bigcap\tau,\F)=0$ for every nonempty subcollection $\tau\subseteq\sigma$. Then $\H_{|\sigma|-2}\left(\bigcup\sigma,\F\right)=0$.
\end{lemma}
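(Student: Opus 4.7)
The plan is to prove the following strengthened statement by induction on $n=|\sigma|$, introducing a parameter $r$ that makes the Mayer--Vietoris recursion close up: \emph{for every nonempty finite collection $\sigma$ of subcomplexes of $\X$ and every integer $r$, if $\H_{r-|\tau|}(\bigcap\tau,\F)=0$ for every nonempty $\tau\subseteq\sigma$, then $\H_{r-1}(\bigcup\sigma,\F)=0$}. Lemma~\ref{lem:aux} is recovered by setting $r=|\sigma|-1$. The point of the parameter is that the natural recursion lowers the collection size by one while sometimes also lowering $r$, so the original statement (where $r$ and $n$ are tied) does not recurse on its own.

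The base case $n=1$ is immediate, since the hypothesis at $\tau=\sigma$ is precisely the conclusion. For the inductive step, write $\sigma=\{A_1,\dots,A_n\}$ and split the union as $U\cup V$ with $U=A_1$ and $V=A_2\cup\cdots\cup A_n$, so that $U\cap V=\bigcup_{i=2}^{n}(A_1\cap A_i)$. The relevant piece of the reduced Mayer--Vietoris sequence is
$$\H_{r-1}(A_1,\F)\oplus\H_{r-1}(V,\F)\longrightarrow\H_{r-1}(\bigcup\sigma,\F)\longrightarrow\H_{r-2}(U\cap V,\F),$$
so it suffices to show that the three outer groups vanish. The first is the hypothesis applied to $\tau=\{A_1\}$. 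The second follows from the induction hypothesis applied to the size-$(n-1)$ collection $\{A_2,\dots,A_n\}$ with the same parameter $r$; its required vanishing conditions are a subset of those assumed on $\sigma$. The third follows from the induction hypothesis applied to the size-$(n-1)$ collection $\{A_1\cap A_i\}_{i=2}^{n}$ with parameter $r-1$: for any $I\subseteq\{2,\dots,n\}$ of cardinality $k$, we have $\bigcap_{i\in I}(A_1\cap A_i)=\bigcap\tau$ for $\tau=\{A_1\}\cup\{A_i:i\in I\}\subseteq\sigma$ of cardinality $k+1$, hence $\H_{(r-1)-k}(\bigcap\tau,\F)=\H_{r-|\tau|}(\bigcap\tau,\F)=0$ by the hypothesis on $\sigma$.

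The main delicate point is the validity of the \emph{reduced} Mayer--Vietoris exact sequence in the low-dimensional edge cases where $U\cap V$ could be empty. In the range $r-1\ge 1$ reduced and unreduced homology agree in the degrees involved and the argument is completely standard; the remaining tiny cases can be discharged directly from the hypothesis, since for instance when $r-1\le 0$ the hypothesis at $\tau$ of large cardinality forces the relevant intersections to be nonempty and the conclusion reduces to a statement about connectedness or nonemptiness that is easy to verify by hand. I expect the bookkeeping of the parameter $r$ across the two different sub-inductions (same $r$ for one branch, $r-1$ for the other) to be the main place where one has to be careful; the Mayer--Vietoris step itself is routine once the right strengthening of the statement is in place.
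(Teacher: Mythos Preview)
Your argument is correct and follows the same Mayer--Vietoris strategy as the paper (peel off one subcomplex, apply the induction hypothesis to the remaining pieces and to the collection of pairwise intersections with the peeled-off piece), but you choose a different strengthening to make the induction close. The paper fixes the degree at $|\sigma|-2$ and instead proves the conclusion for every nonempty subcollection $\sigma'\subseteq\sigma$, inducting on $|\sigma|+|\sigma'|$; you introduce an auxiliary degree parameter $r$, decoupled from $|\sigma|$, and induct simply on $|\sigma|$. The two strengthenings are closely related: your statement applied to a subcollection $\sigma'$ with $r=|\sigma|-1$ recovers exactly the paper's auxiliary statement. Your version is a touch more general and the single induction on $n$ is cleaner bookkeeping than the paper's double induction.

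One small remark on the edge cases you flag: it is worth being explicit that when $r\le 0$ the conclusion is either vacuous ($r\le -1$) or reduces to nonemptiness of some $A_i$ ($r=0$); when $r=1$ the hypothesis forces all $A_i$ connected and all pairwise intersections nonempty, so the union is connected; and when $r\ge 2$, even if $U\cap V=\varnothing$ the conclusion holds because in degree $r-1\ge 1$ reduced and unreduced homology agree and the homology of a disjoint union splits. In fact the paper avoids this discussion by observing at the outset that the hypothesis with $\tau=\sigma$ forces $\bigcap\sigma\neq\varnothing$, so the Mayer--Vietoris pair always has nonempty intersection; that shortcut is not available to you once $r$ is decoupled from $|\sigma|$, but your case analysis works.
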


\begin{proof}
We start with a preliminary remark that will be used several times in the proof: we have $\bigcap\sigma\neq\varnothing$ (obtained with $\tau=\sigma$).

We prove actually that we have $\H_{|\sigma|-2}\left(\bigcup\sigma',\F\right)=0$ for every nonempty subcollection $\sigma'\subseteq\sigma$. We first prove this statement in the special case when $|\sigma'|=1$. In this case, letting $\tau=\sigma'$, the condition of the lemma imposes that $\H_{|\sigma|-2}(\bigcap\sigma',\F)=0$. Since $\sigma'$ has exactly one subcomplex, we have $\bigcap\sigma'=\bigcup\sigma'$, and the conclusion follows.

For the other cases, we proceed by induction on $m=|\sigma|+|\sigma'|$ and we start with the case $m=2$. In this case $\sigma'=\sigma$ and it is a collection of exactly one subcomplex, which is a case we have already treated.

Consider now the case $m\geq 3$. The case $|\sigma'|=1$ being known to be true, we assume that $|\sigma'|\geq 2$. We arbitrarily pick a subcomplex $\A$  in $\sigma'$. We introduce the collection
$$\omega=\big\{\A\cap \B\colon \B\in\sigma\setminus\{\A\}\big\}.$$ (Note that $\A\cap\B\neq\varnothing$ in this formula, since $\bigcap\sigma\neq\varnothing$.)
For every subcollection $\omega_0\subseteq\omega$, there is a $\sigma''\subseteq\sigma\setminus\{\A\}$ such that $\omega_0=\{\A\cap\B\colon
\B\in\sigma''\}$. 
This means that for every nonempty subcollection $\omega_0\subseteq\omega$, we have $\H_{|\sigma|-(|\sigma''|+1)-1}(\bigcap\omega_0,\F)=0$ by the condition of the lemma applied with $\tau=\sigma''\cup\{\A\}$, and thus $\H_{|\omega|-|\omega_0|-1}(\bigcap\omega_0,\F)=0$. The collection $\omega$ satisfies thus the condition of the lemma. The collection $\omega'=\{\A\cap\B\colon\B\in\sigma'\setminus\{\A\}\}$ is a nonempty subcollection of $\omega$ and we have $|\omega|+|\omega'|=m-2$. Hence the induction applies and we get $\H_{|\omega|-2}\left(\bigcup\omega',\F\right)=0$, which means
$$
\H_{|\sigma|-3}\left(\A\cap\left(\bigcup_{\B\in\sigma'\setminus\{\A\}}\B\right),\F\right)=0.
$$ By the Mayer-Vietoris exact sequence of the pair $\left(\bigcup_{\B\in\sigma'\setminus\{\A\}}\B,\A\right)$, we have that $\H_{|\sigma|-2}\left(\bigcup\sigma',\F\right)=0$, as required, because the condition of the lemma with $\tau=\{\A\}$ imposes that $\H_{|\sigma|-2}(\A,\F)=0$ and induction shows that $\H_{|\sigma|-2}\left(\bigcup_{\B\in\sigma'\setminus\{\A\}}\B,\F\right)=0$. (Since $\bigcap\sigma\neq\varnothing$, we have $\A\cap\left(\bigcup_{\B\in\sigma'\setminus\{\A\}}\B\right)\neq\varnothing$ and the Mayer-Vietoris exact sequence holds in the reduced case.)
\end{proof}

\begin{proof}[Proof of Theorem~\ref{thmixed}]
The proof is by induction on $k$. The integer $\ell$ is considered as fixed. If $k=-1$, then it follows directly from Theorem~\ref{thmunion}. Suppose our theorem is true for $k-1$. We shall prove it for $k$.

Consider a simplicial complex $\X$ with a finite collection $\Gamma$ of subcomplexes as in the statement of the theorem we want to prove.

\begin{claim} For every nonnegative integer $j\leq k-1$, there exists a simplicial complex $\X(j)$, obtained by attaching to $\X$ simplices of dimension at most $k-j-1$, and a finite collection $\Gamma(j)$ of subcomplexes such that $\bigcup\Gamma(j)=\X(j)$ and such that
\begin{enumerate}[label=\emph{\alph*})]
\item\label{a} $\N(\Gamma)=\N(\Gamma(j))$.
\item\label{b} $\H_{k-|\tau|}\big(\bigcap\tau,\F\big)=0$ for every $\tau\in\N(\Gamma(j))$ of dimension at most $k-1$.
\item\label{c} $\H_{|\tau|-2} \big(\bigcup\tau,\F\big)=0$ for every $\tau\in\N(\Gamma(j))$ of dimension at least $k+1$ and at most $\ell$.
\item\label{d} $\H_{k-|\tau|-1}\big(\bigcap\tau,\F\big)=0$ for every $\tau\in\N(\Gamma(j))$ of dimension at least $j$ and at most $k-1$.
\end{enumerate}
\end{claim}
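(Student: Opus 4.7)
\begin{proof-claim}
The plan is decreasing induction on $j$, from $j = k-1$ down to $j = 0$. For the base case $j = k-1$, I would take $\X(k-1) = \X$ and $\Gamma(k-1) = \Gamma$: properties (a), (b), and (c) then follow directly from the hypotheses of the theorem (property (b) being the restriction of (1) to dimension at most $k-1$), while (d) only concerns simplices $\tau$ of dimension exactly $k-1$, where it reduces to $\bigcap\tau \neq \varnothing$, which holds by definition of the nerve.

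For the inductive step from $j$ to $j-1$ (with $j\geq 1$), the idea is to apply Lemma~\ref{lem:killing} separately to each subcomplex $\bigcap\tau$ for $\tau\in\N(\Gamma(j))$ of dimension $j-1$, with parameter $d = k-j$. This produces an enlarged complex $\K'_\tau \supseteq \bigcap\tau$ with $\H_i(\K'_\tau,\F) = 0$ for $i \leq k-j-1$ and $\H_i(\K'_\tau, \F) \cong \H_i(\bigcap\tau, \F)$ for $i \geq k-j$, all attached simplices having dimension at most $k-j$. Writing $\Delta_\tau$ for this set of attached simplices, I would then define $\A(j-1) = \A(j)\cup\bigcup_{\tau\ni\A,\,\dim\tau=j-1}\Delta_\tau$ for each $\A\in\Gamma$, and set $\X(j-1) = \bigcup\Gamma(j-1)$. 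The key design choice is that each $s\in\Delta_\tau$ is inserted into $\A(j-1)$ for every $\A\in\tau$ and for no other $\A$: a new simplex in $\bigcap\rho(j-1)$ must then come from some $\Delta_\tau$ with $\tau\supseteq\rho$, while a new simplex in $\bigcup\sigma(j-1)$ comes from some $\Delta_\tau$ with $\tau\cap\sigma\neq\varnothing$.

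Verifying (a)--(d) for $\X(j-1)$ is then dimensional bookkeeping. For (a), if some $\rho\notin\N(\Gamma(j))$ acquired a simplex in $\bigcap\rho(j-1)$, chasing vertices through the $\Delta_\tau$'s would force $\bigcap\rho(j) \neq \varnothing$, a contradiction. For (b), the attached simplices have dimension at most $k-j$, which is strictly less than $k-|\rho|$ when $|\rho|\leq j-1$, so $\H_{k-|\rho|}(\bigcap\rho,\F)$ is unchanged in that range; when $|\rho|=j$, only the contribution $\tau=\rho$ appears and the preservation part of Lemma~\ref{lem:killing}, combined with (b) at level $j$, gives the result. For (d) at dimension $j-1$, the vanishing $\H_{k-j-1}(\K'_\tau,\F) = 0$ is precisely the killing part of Lemma~\ref{lem:killing}; for dimensions at least $j$, the intersections have not been touched.

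The subtlest condition is (c), and this is where I would expect to have to think carefully. For $\sigma$ of dimension at least $k+1$, we have $|\sigma|-2\geq k$, whereas the attached simplices have dimension at most $k-j\leq k-1$ (this is where $j\geq 1$ enters). Since no simplex of dimension $\geq k$ is added, the chain groups $C_{|\sigma|-2}$ and $C_{|\sigma|-1}$ of $\bigcup\sigma$ are unaffected, and hence $\H_{|\sigma|-2}(\bigcup\sigma,\F)$ is unchanged. This is the only step requiring $j\geq 1$, and it is the structural reason why the claim is formulated for $j\in\{0,\ldots,k-1\}$: pushing the induction further would permit attaching $k$-simplices, which might destroy (c).
\end{proof-claim}
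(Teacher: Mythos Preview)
Your approach is the paper's in outline: decreasing induction on $j$, trivial base case $j=k-1$, and an inductive step that invokes Lemma~\ref{lem:killing} to kill the offending homology group of each intersection at the new dimension while preserving the one just above. The paper, however, carries out the inductive step \emph{sequentially}: it picks one $\sigma$ of dimension $j$ at a time (and only those with $\H_{k-j-2}(\bigcap\sigma,\F)\neq 0$), attaches a single batch $\C$, updates $\Gamma$, verifies \ref{a}--\ref{d}, and only then moves to the next $\sigma$.

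Your simultaneous treatment has a gap at exactly the point you flag as ``the key design choice''. You assert that a new simplex in $\bigcap\rho(j-1)$ must come from a single $\Delta_\tau$ with $\tau\supseteq\rho$, and hence that for $\dim\rho\geq j$ the intersections are untouched, while for $\dim\rho=j-1$ one has $\bigcap\rho(j-1)=\K'_\rho$. But the simplices produced by Lemma~\ref{lem:killing} live on the original vertex set of $\bigcap\tau(j)$, so the same abstract simplex can belong to $\Delta_{\tau_1}$ and $\Delta_{\tau_2}$ for distinct $\tau_1,\tau_2$, or can lie in some $\Delta_\tau$ while already being a simplex of $\B(j)$ for a $\B\notin\tau$. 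Either way such a simplex lands in $\bigcap\rho(j-1)$ for some $\rho$ not contained in any single $\tau$, and your control over $\H_{k-|\rho|}$ and $\H_{k-|\rho|-1}$ of that intersection evaporates; in particular the verification of \ref{b} for $|\rho|\geq j$ and of \ref{d} for $\dim\rho\geq j$ breaks. The paper's sequential scheme sidesteps the first difficulty (there is only one $\C$ in play at a time) and handles the second explicitly: after attaching $\C$ for a single $\sigma$, whenever $\tau\setminus\sigma\neq\varnothing$ one fixes an unchanged $\B\in\tau\setminus\sigma$ and argues directly that $\bigcap\tau'=\bigcap\tau$.
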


We prove the claim by decreasing induction on $j$, starting with the base case $j=k-1$. This case is obviously true since in that case, we do not even have to add any simplex, and we set $\Gamma(k-1)=\Gamma$.

Consider the case $j\leq k-2$. We start with $\X(j+1)$ and $\Gamma(j+1)$, which we know to exist. Consider a $\sigma$ in  $\N(\Gamma(j+1))$ of dimension exactly $j$ such that $\H_{k-j-2}\big(\bigcap\sigma,\F\big)\neq 0$. If such a simplex does not exist, we are done: we set $\Gamma(j)=\Gamma(j+1)$. So, suppose that such a $\sigma$ exists. By Lemma~\ref{lem:killing}, we can attach a collection $\C$ of simplices of dimension at most $k-j-1$ to $\bigcap\sigma$ so that $$\H_{k-j-2}\big(\bigcap\sigma',\F\big)=0\quad\mbox{and}\quad\H_{k-j-1}\big(\bigcap\sigma',\F\big)=0,$$ where $\sigma'=\{\A\cup\C\colon\A\in\sigma\}$. We get the right-hand side equality as a consequence of $\H_{k-j-1}\big(\bigcap\sigma',\F\big)=\H_{k-j-1}\big(\bigcap\sigma,\F\big)$.
Define $$\X'=\X(j+1)\cup\C\quad\mbox{and}\quad\Gamma'=(\Gamma(j+1)\setminus\sigma)\cup\sigma'.$$ 
Property~\ref{a} is automatically satisfied for $\Gamma'$: for $\tau\in\Gamma(j+1)$, if $\tau\setminus\sigma\neq\varnothing$, then the corresponding simplex $\tau'$ in $\Gamma'$ is such that $\bigcap\tau=\bigcap\tau'$; if $\tau\subseteq\sigma$, then both $\bigcap\tau$ and $\bigcap\tau'$ are nonempty.

Consider a simplex $\tau'\in\N(\Gamma')$ of dimension at most $k-1$. Denote by $\tau$ the corresponding simplex in $\Gamma(j+1)$.  If $|\tau'|\leq j$, property \ref{b} is satisfied since we have added simplices of dimension at most $k-j-1$. If $|\tau'|\geq j+1$, either $\tau=\sigma$, in which case $\H_{k-|\tau|}\big(\bigcap\tau',\F\big)=0$, or $\tau\neq\sigma$, in which case as above $\bigcap\tau=\bigcap\tau'$. In both cases, property \ref{b} is satisfied. 

Property \ref{c} is satisfied because of the dimension of the attached simplices. 

We repeat this operation as many times as necessary to satisfy property \ref{d}. Note that when we attach $\C$, we do not alter the satisfaction of property \ref{d} for the simplices $\tau$ that already satisfy it: as above, when $\tau\neq\sigma$, we have $\bigcap\tau=\bigcap\tau'$.

At the end of the process, we get a simplicial complex $\X(j)$ and a finite collection $\Gamma(j)$ of subcomplexes covering it, so that properties \ref{a}, \ref{b}, \ref{c}, and~\ref{d} are simultaneously satisfied.

The claim is proved.

\medskip

We can now finish the proof of Theorem~\ref{thmixed}. Apply the claim for $j=0$. It ensures the existence of a simplicial complex $\X(0)$ and a collection $\Gamma(0)$ of subcomplexes covering it, with the properties \ref{a}--\ref{d} satisfied. We want to apply Theorem~\ref{thmixed} for $k-1$. Consider a $\sigma\in\N(\Gamma(0))$ of dimension at most $k-1$. We have $\H_{k-|\sigma|-1}(\bigcap\sigma,\F)=0$ because of property~\ref{d}. Hence, condition~\ref{inter} is satisfied. 
Consider now a $\sigma\in\N(\Gamma(0))$ of dimension exactly $k$. Every strict subset $\tau$ of $\sigma$ is such that $\H_{k-|\tau|}(\bigcap\tau,\F)=0$ because of \ref{b}, and $\H_{-1}(\bigcap\sigma,\F)=0$ because $\bigcap\sigma\neq\varnothing$. Lemma~\ref{lem:aux} implies then that $\H_{k-1}(\bigcup\sigma,\F)=0$. Together with \ref{c}, it implies that condition~\ref{union} is satisfied. 

The simplicial complex $\X(0)$ and the collection $\Gamma(0)$ satisfy the induction hypothesis for $k-1$ and therefore $\H_{\ell}(\N(\Gamma(0)),\F)=\H_{\ell}(\N(\Gamma),\F)$ is of dimension at most $\dim\H_{\ell}(\X(0),\F)$. Since $\X(0)$ has been obtained from $\X$ by attaching simplices of dimension at most $k-1<\ell$, we have $\H_{\ell}(\X(0),\F)=\H_{\ell}(\X,\F)$, and the conclusion follows.
\end{proof}

\section{Applications of the mixed nerve theorem}\label{sec:appli}
 
\subsection{Homological Helly-type results} 

We start our applications with the following Helly-type result.

\begin{theorem}  \label{thmH}
Consider a simplicial complex $\X$ with a finite collection $\Gamma$ of subcomplexes.
Let $k$ be an integer such that $-1\leq k\leq|\Gamma|-2$.
Suppose that the following two conditions are satisfied for every subcollection $\Gamma'\subseteq\Gamma$:
\begin{enumerate}[label=\textup{(\arabic*h)}]
\item\label{inter-h} $\H_{k-|\Gamma'|}\big(\bigcap \Gamma',\F\big)=0$ whenever   $1\leq |\Gamma'| \leq k+1$.
\item\label{union-h} $\H_{|\Gamma'|-2} \big(\bigcup \Gamma',\F\big)=0$ whenever $k+2\leq |\Gamma'|\leq|\Gamma|.$
\end{enumerate}
Then $\bigcap\Gamma\neq\varnothing$.
\end{theorem}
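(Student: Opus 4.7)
The plan is to argue by strong induction on $n:=|\Gamma|$, with $k$ fixed, combining Theorem~\ref{thmixed} with a bootstrap that forces every proper subcollection of $\Gamma$ to have nonempty intersection.

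The driving observation is as follows. Once we know that every proper subcollection of $\Gamma$ intersects, every proper subset of $\Gamma$ is a simplex of $\N(\Gamma)$, so $\N(\Gamma)\supseteq\partial\Delta^{n-1}$, the boundary of the full simplex on vertex set $\Gamma$. Assume, for contradiction, that $\bigcap\Gamma=\varnothing$. Then $\Gamma\notin\N(\Gamma)$, forcing $\N(\Gamma)=\partial\Delta^{n-1}\cong S^{n-2}$, and in particular $\dim\H_{n-2}(\N(\Gamma),\F)=1$. On the other hand, applying Theorem~\ref{thmixed} with $\X:=\bigcup\Gamma$, the given $k$, and $\ell:=n-2$ yields $\dim\H_{n-2}(\N(\Gamma),\F)\leq\dim\H_{n-2}(\bigcup\Gamma,\F)$; conditions~\ref{inter} and~\ref{union} of Theorem~\ref{thmixed} are precisely the restrictions of~\ref{inter-h} and~\ref{union-h} to simplices of $\N(\Gamma)$, so its hypotheses hold. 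The right-hand side vanishes by~\ref{union-h} applied with $\Gamma'=\Gamma$, producing the desired contradiction.

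It remains to feed this argument by establishing ``every proper subcollection intersects'', which is itself proved by induction on $n$. In the base case $n=k+2$ with $k\geq 0$, every proper subcollection has size at most $k+1$; condition~\ref{inter-h} with $|\Gamma'|=k+1$ gives nonempty intersection for these, and smaller subcollections inherit it since their intersections are larger. In the inductive step $n\geq k+3$, each $\Gamma\setminus\{\A\}$ has size $n-1\geq k+2$ and inherits~\ref{inter-h} and~\ref{union-h} with the same~$k$, so the induction hypothesis gives $\bigcap(\Gamma\setminus\{\A\})\neq\varnothing$ for every $\A\in\Gamma$; any proper subcollection is contained in some such $\Gamma\setminus\{\A\}$ and therefore also intersects. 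The sole remaining case is the degenerate one $n=1$ (which forces $k=-1$), where one bypasses the nerve argument entirely: \ref{union-h} with $|\Gamma'|=1$ gives $\H_{-1}(\A,\F)=0$, i.e.\ $\A\neq\varnothing$, and so $\bigcap\Gamma=\A\neq\varnothing$.

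The main obstacle I anticipate is administrative rather than conceptual: the indices $k$ and $\ell$ live at the very boundary of the ranges in which~\ref{inter-h}--\ref{union-h} and conditions~\ref{inter}--\ref{union} speak, so one must carefully check that every hypothesis transfers across the restrictions, that passage from $\Gamma$ to $\Gamma\setminus\{\A\}$ preserves both conditions with the same~$k$, and that the low-$n$ degenerate case is dispatched separately. Once this bookkeeping is out of the way, the computation $\H_{n-2}(\partial\Delta^{n-1},\F)=\F$ and the appeal to Theorem~\ref{thmixed} do all of the real work.
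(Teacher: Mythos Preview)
Your argument is correct and follows essentially the same route as the paper. The paper packages the reduction more compactly by choosing a \emph{minimal} subcollection $\Gamma'\subseteq\Gamma$ with $\bigcap\Gamma'=\varnothing$ (so that every proper subcollection of $\Gamma'$ automatically intersects) and then applies Theorem~\ref{thmixed} directly to~$\Gamma'$ with $\ell=|\Gamma'|-2$; your induction on $|\Gamma|$ is just the unwinding of that minimal-counterexample step, and the core use of Theorem~\ref{thmixed} together with $\N(\Gamma')=\partial\Delta^{|\Gamma'|-1}$ is identical.
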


\begin{proof}  Suppose for a contradiction that there exists a nonempty $\Gamma'\subseteq\Gamma$ such that $\bigcap\Gamma'=\varnothing$. Choose such a $\Gamma'$ of minimal cardinality. Because of condition~\ref{inter-h}, we have $|\Gamma'|\geq k+2$.  Since $\H_{|\Gamma'|-2}(\bigcup\Gamma',\F)=0$ by condition~\ref{union-h}, Theorem~\ref{thmixed} for $\X'=\bigcup\Gamma'$ and $\ell=|\Gamma'|-2$ implies that $\H_{\ell}(\N(\Gamma'),\F)=0$. By definition of $\Gamma'$, the $\ell$-skeleton of $\N(\Gamma')$ is the boundary of the $(\ell+1)$-dimensional simplex and is thus homeomorphic to $\S^{\ell}$. The fact that the $\ell$-th homology group of $\N(\Gamma')$ vanishes implies that there is at least one $(\ell+1)$-dimensional simplex in $\N(\Gamma')$, i.e. that we have $\bigcap\Gamma'\neq\varnothing$, which is a contradiction.
\end{proof}

\begin{example}\label{ex-thmH}
Figure~\ref{fig:thmH} illustrates Theorem~\ref{thmH} with $\X$ being a triangulation of a $3$-dimensional cylinder and with $\Gamma$ containing four subcomplexes denoted by A, B, C, and D. The figure shows the situation in the front disk (with X$=$C) and in the back disk (with X$=$D). The subcomplexes C and D have some thickness but otherwise the cylinder is filled with A and B, the subcomplex A occupying the top-half and the subcomplex B the bottom-half (except for the parts occupied by C and D). Consider for instance the case $\F=\F_2$. The $0$-th homology group of any pair of subcomplexes is zero, except for the pair $\{\text{C,D}\}$. The $1$-th homology group is zero for any triple of subcomplexes. The $2$-th homology group is zero for the full collection $\Gamma$, which has moreover an empty intersection.Theorem~\ref{thmH} for $k=-1$ implies that if we add any curve inside B (avoiding A) to connect C to D (so as to make the $0$-th homology group of that pair vanish), we create a non-contractible cycle in $\text{A}\cup\text{C}\cup\text{D}$.
\end{example}

\begin{figure}[h]
\label{fig:thmH}
\begin{center}
\includegraphics[width=6cm]{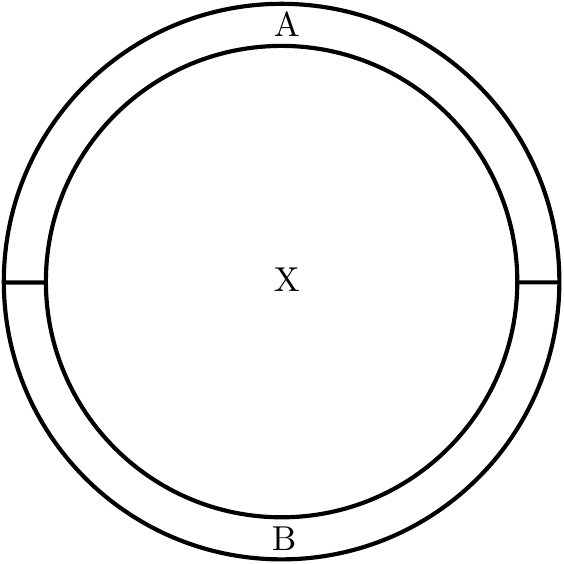}
\caption{The figure used in Example~\ref{ex-thmH} to illustrate Theorem~\ref{thmH}.}
\end{center}
\end{figure}

Note that when $\X$ is embedded in $\R^d$, Theorem~\ref{thmH} gives rise to the following two corollaries. The second one is the already known ``topological Helly theorem''~\cite{MoO} (see also \cite{KM}). The first one is new and could be seen as a ``topological Helly theorem'' for unions.

\begin{corollary} Consider a simplicial complex $\X$ embedded in $\R^d$ with a finite collection $\Gamma$ of subcomplexes.
Suppose that we have $\H_{|\Gamma'|-2}( \bigcup\Gamma',\F)=0$ for every nonempty subcollection $\Gamma'\subseteq\Gamma$ of cardinality at most $d+1$. Then 
$\bigcap \Gamma\neq \varnothing$.
\end{corollary}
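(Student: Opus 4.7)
The plan is to apply Theorem~\ref{thmH} with $k=-1$. For this value, condition~\ref{inter-h} is vacuous (the range $1\leq|\Gamma'|\leq 0$ is empty), while condition~\ref{union-h} reduces to $\H_{|\Gamma'|-2}(\bigcup\Gamma',\F)=0$ for every nonempty $\Gamma'\subseteq\Gamma$. Once that hypothesis is in place, Theorem~\ref{thmH} delivers $\bigcap\Gamma\neq\varnothing$ with no further work, so the whole proof reduces to verifying this vanishing for every cardinality of $\Gamma'$ between $1$ and $|\Gamma|$.

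For $1\leq|\Gamma'|\leq d+1$ the required equation is exactly the assumption of the corollary, so the only thing I must show is that $\H_{|\Gamma'|-2}(\bigcup\Gamma',\F)=0$ holds automatically once $|\Gamma'|\geq d+2$. Here I would invoke the embedding $\X\hookrightarrow\R^d$: every subcomplex of $\X$ has geometric dimension at most $d$, so when $|\Gamma'|-2>d$ the chain group $C_{|\Gamma'|-2}(\bigcup\Gamma',\F)$ vanishes for dimensional reasons. This handles all $|\Gamma'|\geq d+3$ trivially.

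The remaining borderline case $|\Gamma'|=d+2$, where $|\Gamma'|-2=d$ and no dimensional slack is left, is the real point of the proof. Since $\X$ is finite, $\bigcup\Gamma'$ is a compact polyhedron sitting as a proper subset of $S^d=\R^d\cup\{\infty\}$. Alexander duality in $S^d$ then yields
\[
\H_d\bigl(\textstyle\bigcup\Gamma',\F\bigr)\cong \widetilde{H}^{-1}\bigl(S^d\setminus\textstyle\bigcup\Gamma',\F\bigr),
\]
and the right-hand side vanishes because the complement is nonempty. With this borderline case settled, condition~\ref{union-h} is satisfied for every nonempty $\Gamma'\subseteq\Gamma$ and Theorem~\ref{thmH} closes the argument. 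I do not foresee any genuine obstacle: everything past the Alexander-duality step is a matter of bookkeeping.
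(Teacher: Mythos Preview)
Your argument is correct and is exactly the intended derivation from Theorem~\ref{thmH} with $k=-1$: the hypothesis handles $|\Gamma'|\leq d+1$, while the embedding in $\R^d$ forces $\H_{|\Gamma'|-2}(\bigcup\Gamma',\F)=0$ for $|\Gamma'|\geq d+2$ (trivially by dimension when $|\Gamma'|\geq d+3$, and via Alexander duality in $S^d$ for the borderline case $|\Gamma'|=d+2$). One small remark: the paper does not assume $\X$ is finite, so the clause ``since $\X$ is finite'' is not literally justified; but any $d$-cycle of $\bigcup\Gamma'$ is supported on a finite (hence compact) subcomplex, to which your Alexander-duality step applies verbatim, so the fix is immediate.
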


\begin{corollary} Consider a simplicial complex $\X$ embedded in $\R^d$ with a finite collection $\Gamma$ of subcomplexes such that $|\Gamma|\geq d+2$.
Suppose that we have$\H_{d-|\Gamma'|}( \bigcap\Gamma',\F)=0$ for every subcollection $\Gamma'\subset\Gamma$ of cardinality at most $d+1$. Then 
$\bigcap \Gamma\neq \varnothing$.
\end{corollary}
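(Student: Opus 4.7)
The plan is to apply Theorem~\ref{thmH} directly with $k=d$, which is a valid choice since $|\Gamma|\geq d+2=k+2$. With this choice, condition~\ref{inter-h} reads $\H_{d-|\Gamma'|}(\bigcap\Gamma',\F)=0$ for $1\leq|\Gamma'|\leq d+1$, which is exactly the hypothesis of the corollary, so nothing needs to be verified there.

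The work lies in checking condition~\ref{union-h}, namely $\H_{|\Gamma'|-2}(\bigcup\Gamma',\F)=0$ for every subcollection $\Gamma'\subseteq\Gamma$ with $d+2\leq|\Gamma'|\leq|\Gamma|$. For any such $\Gamma'$, the union $\bigcup\Gamma'$ is a finite subcomplex of $\X$, hence a compact polyhedron embedded in $\R^d$, and in particular has dimension at most $d$. When $|\Gamma'|\geq d+3$, the relevant degree $|\Gamma'|-2$ strictly exceeds $d$, so the homology vanishes for purely dimensional reasons. The borderline case $|\Gamma'|=d+2$, which asks for $\H_d(\bigcup\Gamma',\F)=0$, is the only spot where a genuine topological input is required: I would invoke Alexander duality in $S^d\supseteq\R^d$, giving $\H_d(K,\F)\cong\check{H}^{-1}(S^d\setminus K,\F)=0$ for any compact polyhedron $K\subset\R^d\subset S^d$.

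Once both conditions are in place, Theorem~\ref{thmH} delivers $\bigcap\Gamma\neq\varnothing$ and the corollary follows. The only real obstacle is the borderline dimension $|\Gamma'|-2=d$, where the dimensional bound alone does not suffice; everything else is either a direct translation of the hypothesis or an immediate consequence of $\dim\X\leq d$.
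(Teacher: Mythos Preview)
Your proof is correct and follows precisely the route the paper intends: apply Theorem~\ref{thmH} with $k=d$, so that condition~\ref{inter-h} becomes the hypothesis verbatim, while condition~\ref{union-h} holds automatically from the embedding in $\R^d$ (by the dimension bound when $|\Gamma'|\geq d+3$, and by Alexander duality in $S^d$ for the borderline case $|\Gamma'|=d+2$). The paper does not spell out a proof, merely remarking that the corollary follows from Theorem~\ref{thmH} when $\X$ is embedded in $\R^d$; your write-up supplies exactly the intended details.
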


\subsection{Homological Sperner-type results}

All results of this subsection deal with a simplicial complex $\K$ whose vertices are partitioned into subsets $V_0,\ldots,V_m$  as colours and ensure the existence of a rainbow simplex under some homological condition.  Formally, a {\em rainbow simplex} in such a simplicial complex is a simplex that has exactly one vertex in each $V_i$. We show how the nerve theorems introduced in the present paper can be used to get results of this type. We were however not able to prove Theorem~\ref{thm:homol_sperner_gen} within this framework (even with the field $\F$ in place of the abelian group $A$) and a proof with a completely different approach is given in Section~\ref{sec:homol_sperner_gen}.

For $S\subseteq\{0,\dots, m\}$, we denote by $\K_S$ the subcomplex of $\K$ induced by the vertices in $\bigcup_{i\in S}V_i$. The following theorem is a generalization of the Meshulam lemma cited in the introduction.

\begin{theorem}\label{thm:homol_sperner}
Consider a simplicial complex $\K$ whose vertices are partitioned into $m+1$ subsets $V_0,\ldots,V_m$. Suppose that $\H_{|S|-2}(\K_S,\F)=0$ for every nonempty $S\subseteq\{0,\dots,m\}$. Then there exists at least one rainbow simplex in $\K$.
\end{theorem}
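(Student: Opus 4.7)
My plan is to argue by contradiction, applying Theorem~\ref{thmixed} in its pure-intersection form $k=\ell$. Suppose $\K$ contains no rainbow simplex, and for each $i\in\{0,\ldots,m\}$ set $\A_i := \K_{\{0,\ldots,m\}\setminus\{i\}}$, the subcomplex of simplices of $\K$ whose vertex set avoids $V_i$. Let $\Gamma := \{\A_0,\ldots,\A_m\}$. A simplex $\tau\in\K$ lies in some $\A_i$ exactly when $\tau$ misses some color class; equivalently, $\tau\notin\bigcup\Gamma$ iff the vertex set of $\tau$ meets every $V_i$, and in that case one can pick one vertex per color inside $\tau$ to get a rainbow face of $\tau$, contradicting the assumption. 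Hence $\bigcup\Gamma=\K$, so the setup of Theorem~\ref{thmixed} is in place.

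I then identify the nerve. For any nonempty $T\subseteq\{0,\ldots,m\}$, $\bigcap_{i\in T}\A_i=\K_{T^c}$, where $T^c:=\{0,\ldots,m\}\setminus T$. The hypothesis applied to $S=\{i\}$ forces $\K_{\{i\}}$ to be nonempty, so each $V_i$ is nonempty and $\K_{T^c}$ is nonempty whenever $T^c\neq\varnothing$. Consequently the simplices of $\N(\Gamma)$ are exactly the subsets of $\{0,\ldots,m\}$ of size between $1$ and $m$; the full set $\{0,\ldots,m\}$ is excluded because $\K_\varnothing$ is empty. Thus $\N(\Gamma)$ is the boundary of the full $m$-simplex, homeomorphic to $S^{m-1}$, and $\dim\H_{m-1}(\N(\Gamma),\F)=1$.

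Next I invoke Theorem~\ref{thmixed} with $k=\ell=m-1$. Condition~\ref{union} is vacuous since $k=\ell$, and condition~\ref{inter} requires $\H_{(m-1)-|T|}(\K_{T^c},\F)=0$ for $1\leq|T|\leq m$. Writing $S=T^c$, this becomes precisely $\H_{|S|-2}(\K_S,\F)=0$ for $1\leq|S|\leq m$, which is granted by the hypothesis. The conclusion then reads $\dim\H_{m-1}(\N(\Gamma),\F)\leq\dim\H_{m-1}(\K,\F)$. Applying the hypothesis one last time with $S=\{0,\ldots,m\}$ yields $\H_{m-1}(\K,\F)=0$, forcing $1\leq 0$, the desired contradiction.

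The only delicate step is the choice of cover: it is tempting to cover $\K$ by the ``monochromatic'' subcomplexes $\K_{\{i\}}$, but these fail to cover $\K$ altogether. The dual ``avoid one color'' cover instead makes intersections coincide with the induced subcomplexes $\K_S$ appearing in the hypothesis and turns the nerve into a sphere whose top homology is non-vanishing. Once this combinatorial identification is secured, the topology reduces to a dimension count, and the union part of Theorem~\ref{thmixed} is not even needed.
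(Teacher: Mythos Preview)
Your argument is correct. The cover $\A_i=\K_{\{0,\ldots,m\}\setminus\{i\}}$, the identification $\bigcap_{i\in T}\A_i=\K_{T^c}$, the fact that $\bigcup\Gamma=\K$ under the no-rainbow assumption, and the nerve being $\partial\Delta_m$ all check out, and the application of Theorem~\ref{thmixed} with $k=\ell=m-1$ yields the contradiction exactly as you describe.

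This is, however, a different route from the paper's direct proof of Theorem~\ref{thm:homol_sperner}. The paper works on the barycentric subdivision $\sd\K$, covers it by the subcomplexes $\A_i=\{\tau\in\sd\K:\tau\cap V_i\neq\varnothing\}$, invokes Lemma~\ref{lem:S} to identify the homology of $\bigcup_{i\in S}\A_i$ with that of $\K_S$, and then applies the \emph{union} nerve theorem (Theorem~\ref{thmunion}) inside an induction on $|S|$. Your approach instead uses the ``avoid one colour'' cover on $\K$ itself, so that the hypothesis matches the \emph{intersection} conditions of Theorem~\ref{thmixed}, and the classical case $k=\ell$ of that theorem suffices. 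In fact your argument is precisely the specialization to $k=m-1$ of the paper's proof of the more general Theorem~\ref{remixed} (there the same cover is denoted $\B_i$ and the argument is routed through Theorem~\ref{thmH}); the paper explicitly notes that this case recovers Theorem~\ref{thm:homol_sperner}. What your version buys is economy: no barycentric subdivision, no auxiliary Lemma~\ref{lem:S}, no induction, and no use of the union half of Theorem~\ref{thmixed}. What the paper's direct proof buys is that it showcases the new union nerve theorem in action.
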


Meshulam's lemma is the same statement with the stronger requirement that $\K_S$ is \break $({|S|-2})$-acyclic instead of $\widetilde{H}_{|S|-2}(\K_S,\F)=0$. Theorem~\ref{thm:homol_sperner} has been recently introduced by the second author~\cite{Mo1}.  We present here a new proof showing that it is a consequence of our ``union'' version of the nerve theorem (Theorem~\ref{thmunion}). Right after this proof, we will present a generalization of Theorem~\ref{thm:homol_sperner}, with yet another proof.

We need a preliminary lemma. For an integer $i\in\{0,\dots,m\}$, we define $\A_i$ to be subcomplex of $\sd\K$ induced by the vertices $\tau\in V(\sd\K)$ such that $\tau\cap V_i\neq\varnothing$.

\begin{lemma}\label{lem:S}
For any nonempty $S\subseteq \{0,\dots,m\}$, the simplicial complexes $\bigcup_{i\in S}\A_i$ and $\K_S$ have same homology groups.
\end{lemma}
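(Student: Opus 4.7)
The plan is to identify both simplicial complexes with order complexes of posets of simplices, and then build a deformation retraction from $\bigcup_{i\in S}\A_i$ onto $\sd\K_S$ using the order homotopy lemma already cited as \cite[Lemma C.3]{Lon} in the proof of Lemma~\ref{lem:carrier}.

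Write $V_S=\bigcup_{i\in S}V_i$. First I would unpack the definitions: the vertices of $\sd\K$ are the simplices of $\K$, and $\bigcup_{i\in S}\A_i$ is the full subcomplex of $\sd\K$ induced by those simplices $\tau$ of $\K$ satisfying $\tau\cap V_S\neq\varnothing$, so it is exactly the order complex of the subposet $P=\{\tau\in\K\colon\tau\cap V_S\neq\varnothing\}$ ordered by inclusion. Similarly, $\sd\K_S$ is the order complex of $Q=\{\tau\in\K\colon\tau\subseteq V_S\}$, and $Q\subseteq P$ gives an inclusion $i\colon\sd\K_S\hookrightarrow\bigcup_{i\in S}\A_i$.

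Next I would define a candidate retraction $r\colon P\to Q$ by $r(\tau)=\tau\cap V_S$. The key verifications are: (i) $r(\tau)\in Q$, since $\tau\cap V_S$ is a face of $\tau\in\K$ and hence a simplex of $\K$ contained in $V_S$, and it is nonempty because $\tau\in P$; (ii) $r$ is order preserving, since $\tau\subseteq\tau'$ implies $\tau\cap V_S\subseteq\tau'\cap V_S$; and (iii) $r\circ i=\id_Q$ because $\tau\subseteq V_S$ gives $\tau\cap V_S=\tau$. Thus $r$ extends to a simplicial map on order complexes and is a retraction.

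Then I would observe that $i\circ r\colon P\to P$ satisfies $(i\circ r)(\tau)=\tau\cap V_S\subseteq\tau=\id_P(\tau)$, i.e.\ $i\circ r\leq\id_P$ as poset maps. The order homotopy lemma \cite[Lemma C.3]{Lon} then gives that $i\circ r$ is homotopic to $\id_P$, so the inclusion $\sd\K_S\hookrightarrow\bigcup_{i\in S}\A_i$ is a homotopy equivalence and in particular induces an isomorphism on homology. Composing with the isomorphism $\H_*(\K_S,\F)\cong\H_*(\sd\K_S,\F)$ coming from the algebraic subdivision theorem \cite[Theorems 17.2 and 51.1]{Mun} yields the desired identification of the homology groups. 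No step here looks like a serious obstacle; the only delicate point is being careful that $\bigcup_{i\in S}\A_i$ really is induced (not just generated) by the stated vertex set, so that it coincides with the order complex of $P$.
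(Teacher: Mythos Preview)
Your proof is correct and is essentially the same as the paper's: your retraction $r(\tau)=\tau\cap V_S$ coincides with the paper's map $\lambda(\tau)=\tau\setminus\bigcup_{i\notin S}V_i$ (since the $V_i$ partition the vertices), and both arguments conclude via $i\circ r\leq\id$ and the order homotopy lemma. Your concern about $\bigcup_{i\in S}\A_i$ being induced is well placed but resolves cleanly: in any chain $\tau_0\subsetneq\cdots\subsetneq\tau_s$, the smallest element $\tau_0$ meeting $V_S$ forces it to meet some fixed $V_{i_0}$ with $i_0\in S$, whence every $\tau_j\supseteq\tau_0$ meets $V_{i_0}$ too and the whole chain lies in $\A_{i_0}$.
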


\begin{proof}
For $\tau\in V\left(\bigcup_{i\in S}\A_i\right)$, we define $\lambda(\tau)$ to be $\tau\setminus\bigcup_{i\notin S}V_i$. It induces a simplicial map $\bigcup_{i\in S}\A_i\rightarrow\sd\K_S$. Now, consider the simplicial inclusion map $j:\sd\K_S\rightarrow\bigcup_{i\in S}\A_i$. We have clearly $\lambda\circ j=\id_{\sd\K_S}$. We also have $j\circ\lambda\leq\id_{\bigcup_{i\in S}\A_i}$ (with the order-preserving map point of view; see the proof of Lemma~\ref{lem:carrier}). The order homotopy lemma~\cite[Lemma C.3]{Lon} implies then that $j$ and $\lambda$ are homotopy inverse.
\end{proof}

\begin{proof}[Proof of Theorem~\ref{thm:homol_sperner}]
We prove by induction on $|S|$ that $\bigcap_{i\in S}\A_i\neq\varnothing$ for any nonempty $S\subseteq\{0,\ldots,m\}$. Since $\widetilde{H}_{-1}(\K_S,\F)=0$ for any singleton $S\subseteq \{0,\dots,m\}$, every $\A_i$ is nonempty. It proves that the above statement is correct for $|S|=1$. Consider now a set $S\subseteq \{0,\dots,m\}$ of cardinality $s\geq 2$. We denote by $\N_S$ the nerve of $\{\A_i\colon i\in S\}$. By induction, $\N_S$ contains the boundary of the $(s-1)$-dimensional simplex (with vertex set $\{\A_i\colon i\in S\}$).
According to Lemma~\ref{lem:S}, we have $\widetilde{H}_{|T|-2}(\bigcup_{i\in T}\A_i,\F)=0$ for every nonempty subset $T$ of $S$. Theorem~\ref{thmunion} with $\X=\bigcup_{i\in S}\A_i$,  $\Gamma=\{\A_i\colon i\in S\}$, and $\ell=s-2$ implies then that $\widetilde{H}_{s-2}(\N_S,\F)=0$ and in particular that there is at least one $(s-1)$-dimensional simplex in $\N_S$: this latter simplicial complex is thus exactly the $(s-1)$-dimensional simplex and $\bigcap_{i\in S}\A_i\neq\varnothing$.

To conclude, note that any vertex of $\bigcap_{i=0}^m\A_i$ is a simplex of $\K$ intersecting every $V_i$.
\end{proof}

Theorem~ \ref{thm:homol_sperner} tells us that the responsibility for the existence of a  rainbow simplex is due to the homology of the subcomplexes $\K_S$. Next we shall see that this responsibility can be shared by other subcomplexes. For $S\subseteq \{0,\ldots,m\}$, we denote by $\tilde \K_S$ the subcomplex of 
$\K$ consisting of  those simplices $\sigma$ of $\K$ for which the subset of colours assigned to $\sigma$ does not contain $S$.  

\begin{theorem}\label{remixed}
 Consider a simplicial complex $\K$ whose vertices are partitioned into $m+1$ subsets $V_0,\ldots,V_m$ and such that $\widetilde H_{m-1}(\K,\F)=0$. Let $k$ be an integer such that $-1\leq k\leq m-1$.  Suppose that for every $S\subseteq\{0,\ldots,m\}$:
 \begin{enumerate}[label=\textup{(\arabic*s)}]
\item\label{inter-s} $\H_{k-|S|}( \K_{\{0,\ldots,m\}\setminus S},\F)=0$  whenever $1\leq |S| \leq k+1$.
\item\label{union-s} $\H_{|S|-2} (\tilde \K_S,\F)=0$ whenever $k+2 \leq |S| \leq m$.
\end{enumerate} 
 Then $\K$ contains a rainbow simplex.
\end{theorem}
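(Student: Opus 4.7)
The idea is to apply the mixed nerve theorem (Theorem~\ref{thmixed}) to $\K$ with a covering by ``colour-avoiding'' subcomplexes. For each $i\in\{0,\ldots,m\}$, let $\B_i\subseteq\K$ be the subcomplex consisting of those simplices $\sigma$ with $\supp(\sigma)\not\ni i$; this is a subcomplex because missing colour $i$ is a downward-hereditary property on simplices. A direct computation on simplices yields, for every nonempty $S\subseteq\{0,\ldots,m\}$,
\[
\bigcap_{i\in S}\B_i=\K_{\{0,\ldots,m\}\setminus S}\qquad\text{and}\qquad\bigcup_{i\in S}\B_i=\tilde\K_S.
\]
In particular, $\bigcup_{i=0}^{m}\B_i$ is exactly the subcomplex of non-rainbow simplices of $\K$.

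Assume for contradiction that $\K$ contains no rainbow simplex. Every simplex of $\K$ then misses some colour, so $\bigcup_{i=0}^{m}\B_i=\K$, and the covering hypothesis of the mixed nerve theorem is met with $\X=\K$ and $\Gamma=\{\B_0,\ldots,\B_m\}$. Take $\ell=m-1$ and the $k$ provided by the statement; then $-1\le k\le\ell=m-1<m+1=|\Gamma|$. Via the two identities above, condition~\ref{inter} of Theorem~\ref{thmixed} becomes literally condition~\ref{inter-s} of Theorem~\ref{remixed}, and condition~\ref{union} becomes~\ref{union-s}. Theorem~\ref{thmixed} therefore gives
\[
\dim\H_{m-1}(\N(\Gamma),\F)\leq\dim\H_{m-1}(\K,\F)=0,
\]
where the equality uses the global assumption $\H_{m-1}(\K,\F)=0$.

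To close the argument I identify $\N(\Gamma)$. Its vertex set is $\{\B_0,\ldots,\B_m\}$, and the top face is excluded since $\bigcap_{i=0}^{m}\B_i=\K_\varnothing=\varnothing$. For any proper $S\subsetneq\{0,\ldots,m\}$ one has $\bigcap_{i\in S}\B_i=\K_{\{0,\ldots,m\}\setminus S}$, which is nonempty provided every $V_j$ is nonempty. The non-emptiness of each $V_j$ is enforced by the hypotheses: for $k=m-1$ it is immediate from~\ref{inter-s} applied at $S=\{0,\ldots,m\}\setminus\{j\}$ (which forces $\H_{-1}(V_j,\F)=0$), while for smaller $k$ it follows because~\ref{union-s} at the same $S$ becomes incompatible with $V_j=\varnothing$ (if $V_j=\varnothing$ then $\tilde\K_S$ collapses onto unions of strict colour-induced subcomplexes whose $(m-2)$-homology cannot vanish). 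Hence $\N(\Gamma)=\partial\Delta_m$, and $\H_{m-1}(\partial\Delta_m,\F)=\F\neq 0$, contradicting the bound above.

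The main obstacle I expect is the degenerate-case verification in the last paragraph: confirming uniformly in $k\in\{-1,\ldots,m-1\}$ that the combination of~\ref{inter-s} and~\ref{union-s} rules out $V_j=\varnothing$. This is what ensures the nerve $\N(\Gamma)$ is truly the boundary of the $m$-simplex rather than a smaller, possibly contractible, subcomplex, and therefore what makes the final contradiction go through.
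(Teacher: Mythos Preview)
Your setup is identical to the paper's: define $\B_i=\K_{\{0,\dots,m\}\setminus\{i\}}$ and use the identities $\bigcap_{i\in S}\B_i=\K_{\{0,\dots,m\}\setminus S}$ and $\bigcup_{i\in S}\B_i=\tilde\K_S$. The paper, however, does \emph{not} apply Theorem~\ref{thmixed} directly. It routes through the Helly-type Theorem~\ref{thmH}: since (1s) and (2s) supply conditions~\ref{inter-h} and~\ref{union-h} for all $|\Gamma'|\le m$ but $\bigcap\Gamma=\varnothing$, the contrapositive of Theorem~\ref{thmH} forces $\H_{m-1}(\tilde\K_I,\F)\neq 0$; combined with $\H_{m-1}(\K,\F)=0$ this gives $\tilde\K_I\neq\K$, hence a rainbow simplex. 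The point is that the proof of Theorem~\ref{thmH} already contains the minimality trick (choose a \emph{minimal} $\Gamma'$ with empty intersection, so that its nerve is automatically a full simplex boundary), and this is precisely what your argument is missing.

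Your direct application of Theorem~\ref{thmixed} with $\ell=m-1$ is fine, and it correctly yields $\H_{m-1}(\N(\Gamma),\F)=0$. The gap is exactly where you flag it: the identification $\N(\Gamma)=\partial\Delta_m$ needs every $V_j\neq\varnothing$, and your justification for $k<m-1$ is not a proof. The sentence ``if $V_j=\varnothing$ then $\tilde\K_S$ collapses onto unions of strict colour-induced subcomplexes whose $(m-2)$-homology cannot vanish'' asserts a non-vanishing that you have not established; there is no mechanism in your argument that forces $\H_{m-2}(\tilde\K_{I\setminus\{j\}},\F)\neq 0$ from (1s) and (2s) alone. (The statement is in fact true, but proving it essentially requires the theorem itself or the minimality argument hidden in Theorem~\ref{thmH}.) Without it, when some $V_j=\varnothing$ the nerve is a proper subcomplex of $\partial\Delta_m$, hence has trivial $(m-1)$-homology, and your contradiction evaporates.

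The cleanest repair is the paper's: either invoke Theorem~\ref{thmH} as a black box, or replicate its key step by replacing $\Gamma$ with a \emph{minimal} $\Gamma'\subseteq\Gamma$ satisfying $\bigcap\Gamma'=\varnothing$ and applying Theorem~\ref{thmixed} with $\X=\bigcup\Gamma'$ and $\ell=|\Gamma'|-2$. Minimality then gives $\N(\Gamma')\supseteq\partial\Delta_{|\Gamma'|-1}$ for free, and (1s), (2s) cover the required range since $|\Gamma'|-1\le m$.
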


\begin{proof}
Let $I=\{0,\ldots,m\}$.
For $i\in I$, we define $\B_i$ to be $\K_{ I\setminus\{i\} }$. Note that 
$$\bigcap_{i\in S}\B_i=\K_{I\setminus S}\qquad\mbox{and}\qquad\bigcup_{i\in S}\B_i=\tilde \K_S.$$
Note also that $\K_I=\K$, $\tilde \K_{\{i\}}=\B_i$ and the inclusionwise minimal simplices of $\K$ not in $\tilde \K_I$ are the rainbow simplices.

Using these remarks, it is easy to check that conditions~\ref{inter-s} and~\ref{union-s} imply that $\X=\K$ and $\Gamma=\{\B_i\colon i\in I\}$ satisfy conditions~\ref{inter-h} and~\ref{union-h} of Theorem~\ref{thmH}, except for $|\Gamma'|=|\Gamma|=m+1$. Since $\bigcap_{i\in I}\B_i =\varnothing$, we have thus $\H_{m-1} ( \bigcup_{i\in I}\B_i,\F )\not= 0$. Note that  $\bigcup_{i\in I}\B_i = \tilde \K_I$, hence $ \tilde \K_I$ is different from $\K$ because by hypothesis $\widetilde H_{m-1}(\K,\F)=0$. Consequently there must be a rainbow simplex in $\K$. This completes the proof of  our theorem.
\end{proof}

Theorem \ref{remixed} for $k=m-1$ is exactly Theorem~\ref{thm:homol_sperner}. When $k=-1$, we get the following corollary.

\begin{corollary}\label{cor:remixed}
Consider a simplicial complex $\K$ whose vertices are partitioned into $m+1$ subsets $V_0,\ldots,V_m$ and such that $\H_{m-1}(\K,\F)=0$. Suppose that for every $S\subseteq\{0,\ldots,m\}$, we have $\H_{|S|-2} (\tilde \K_S,\F)=0$ whenever $1\leq |S| \leq m$. Then $\K$ contains a rainbow simplex.
\end{corollary}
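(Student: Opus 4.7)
The plan is to obtain Corollary~\ref{cor:remixed} as the direct specialization of Theorem~\ref{remixed} to the boundary value $k=-1$. The sentence immediately preceding the corollary in the text already announces this, so no new argument is needed; what I would do is simply verify that all three groups of hypotheses match up and that the conclusion is the same.

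First I would check the hypothesis $\H_{m-1}(\K,\F)=0$: this is literally the global homology-vanishing assumption of Theorem~\ref{remixed} (recall the paper's convention that $\H$ abbreviates $\widetilde{H}$). Next, I would verify the two indexed conditions. For $k=-1$, condition~\ref{inter-s} demands $\H_{k-|S|}(\K_{\{0,\ldots,m\}\setminus S},\F)=0$ for every $S$ with $1\leq |S|\leq k+1=0$; this range is empty, so the condition is vacuously satisfied and imposes nothing. Condition~\ref{union-s} demands $\H_{|S|-2}(\tilde\K_S,\F)=0$ whenever $k+2=1\leq|S|\leq m$, which is exactly the assumption of the corollary. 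The bound $-1\leq k\leq m-1$ required by Theorem~\ref{remixed} is also met at the left endpoint (assuming $m\geq 0$, which is implicit in having at least one colour class).

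Since all hypotheses of Theorem~\ref{remixed} hold for $k=-1$, its conclusion applies: $\K$ contains a rainbow simplex. There is essentially no obstacle here; the only thing to be slightly careful about is not to conflate $|S|\leq k+1$ with $|S|\leq k+2$ when substituting $k=-1$, which would spuriously introduce the singleton case into condition~\ref{inter-s}. Once that is handled correctly, the corollary follows with no additional argument.
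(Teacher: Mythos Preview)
Your proposal is correct and matches the paper's approach exactly: the corollary is stated immediately after the remark that it is the case $k=-1$ of Theorem~\ref{remixed}, and the paper gives no further proof. Your verification that condition~\ref{inter-s} becomes vacuous and condition~\ref{union-s} reduces to the corollary's hypothesis is precisely the intended justification.
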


\begin{example}
A {\em totally dominating set} in a graph $H$ is a subset $D$ of its vertices such that any vertex has a neighbour in $D$ (if there is no loop, a vertex is not a neighbour of itself). The minimum cardinality of a totally dominating set is the {\em total domination number} of $H$ and is denoted $\widetilde\gamma(H)$. 

We have:

\medskip

{\em Consider a graph $G$ with a three-colouring of the vertices  such that, for any fixed pair of colours, removing all edges with one endpoint of each colour does not disconnect the graph. The colouring does not need to be proper but there must be at least one vertex from each color. If $\widetilde\gamma(\overline{G})\geq 5$, then there is triangle in $G$ with the three colours.}

\medskip

This is a direct consequence of Corollary~\ref{cor:remixed} with $\K$ being the clique complex of $G$ and $m=2$. (The {\em clique complex} of a graph is the simplicial complex whose vertices are the vertices of the graph and whose simplices are its cliques.) The fact that $\widetilde\gamma(\overline{G})$ is at least $5$ implies that the clique complex of $G$ is $1$-connected (see, e.g.,~\cite[Section 2]{ABZ07}).
\end{example}

Corollary~\ref{cor:remixed} (and thus Theorem~\ref{remixed}) is not true if we remove the condition $\H_{m-1}(\K,\F)=0$. A counter-example for $m=2$ is obtained with $\K$ being the triangulation of a torus and with each $V_i$ inducing a non-contractile strip.

We end this subsection with further consequences of Theorem \ref{thm:homol_sperner}. We present them here since they are related results that we obtained while working on our two main results, Theorems~\ref{thmixed} and~\ref{thm:homol_sperner_gen}, but we do not need any of these latter to establish them. Complementary results with a similar flavour have been recently obtained by the second author~\cite{Mo2}.

Let $\K$ be a simplicial complex whose vertex set is partitioned into colour sets $V_0,\ldots,V_m$. We say that  a vertex $v\in V_i$ is \emph{isolated on its colour} if $v$ is an isolated point in $\K_{\{i\}}$.

\begin{theorem}\label{thm:isolated}
Let $\K$ be a simplicial complex whose vertex set is partitioned into colour sets $V_0,\ldots,V_m$ and suppose the vertex $v\in V(\K)$ is isolated on its colour. If $\widetilde{H}_{|S|-2}(\K_S,\F)=0$ for every nonempty $S\subseteq \{0,\dots,m\}$, then there exists a rainbow simplex $\sigma$ in $\K$ containing the vertex $v$.
\end{theorem}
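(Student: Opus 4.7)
My plan is to reduce the problem to an application of Theorem~\ref{thm:homol_sperner} on the link of $v$. Say $v\in V_0$. Because $v$ is isolated in $\K_{\{0\}}$, no simplex of $\K$ can contain $v$ together with any other vertex of $V_0$, so the vertex set of $\L:=\lk(v,\K)$ is disjoint from $V_0$, and the restricted colouring partitions $V(\L)$ into the classes $V_i\cap V(\L)$ for $i\in\{1,\dots,m\}$. A rainbow simplex $\tau$ of $\L$ with respect to this partition corresponds bijectively to the rainbow simplex $\{v\}\cup\tau$ of $\K$ containing $v$, so it suffices to exhibit such a $\tau$ by applying Theorem~\ref{thm:homol_sperner} to $\L$.

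To apply Theorem~\ref{thm:homol_sperner} I must verify that $\widetilde H_{|T|-2}(\L_T,\F)=0$ for every nonempty $T\subseteq\{1,\dots,m\}$. Using the isolation of $v$ once more, one checks that $\L_T=\lk(v,\K_{T\cup\{0\}})$, and the natural tool is the Mayer--Vietoris decomposition $\K_{T\cup\{0\}}=\overline{\mathrm{star}}(v,\K_{T\cup\{0\}})\cup(\K_{T\cup\{0\}}\setminus\{v\})$, whose intersection is exactly $\L_T$. The closed star is a cone, hence contractible, and the hypothesis at $S=T\cup\{0\}$ gives $\widetilde H_{|T|-1}(\K_{T\cup\{0\}},\F)=0$. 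Extracting the relevant piece of the Mayer--Vietoris sequence then yields an injection
\[\widetilde H_{|T|-2}(\L_T,\F)\hookrightarrow \widetilde H_{|T|-2}(\K_{T\cup\{0\}}\setminus\{v\},\F),\]
so the problem reduces to proving the vanishing of the right-hand side.

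The main obstacle is this last vanishing. When $V_0=\{v\}$ the complex $\K_{T\cup\{0\}}\setminus\{v\}$ coincides with $\K_T$ and the hypothesis at $S=T$ closes the argument immediately. To handle the general case I intend to proceed by induction on $|V_0|$: pick an auxiliary $v'\in V_0\setminus\{v\}$, pass to the induced subcomplex $\K\setminus\{v'\}$ in which $v$ remains isolated on its colour, and apply the inductive hypothesis there to produce a rainbow simplex containing $v$. The critical task in the inductive step is verifying that the homological conditions $\widetilde H_{|S|-2}(\K_S\setminus\{v'\},\F)=0$ still hold; a second Mayer--Vietoris around $\overline{\mathrm{star}}(v',\K_S)$ reduces this to a condition on the link of $v'$ in $\K_S$, and I expect the hardest part to be either a careful choice of $v'$ or, failing that, an auxiliary use of Lemma~\ref{lem:killing} to attach enough low-dimensional simplices to $\K$ away from $v$ so as to kill the offending homology while preserving the other hypotheses.
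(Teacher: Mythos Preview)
Your overall strategy---reduce to the link $\L=\lk(v,\K)$ and invoke Theorem~\ref{thm:homol_sperner}---is exactly the paper's, and your identification $\L_T=\lk(v,\K_{T\cup\{0\}})$ is correct. The gap is in how you attempt to verify $\widetilde H_{|T|-2}(\L_T,\F)=0$.

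Your Mayer--Vietoris decomposition of $\K_{T\cup\{0\}}$ into the closed star of $v$ and its complement yields an injection into $\widetilde H_{|T|-2}(\K_{T\cup\{0\}}\setminus\{v\},\F)$, but the hypotheses say nothing about this group: the only assumption on $\K_{T\cup\{0\}}$ concerns degree $|T|-1$, not $|T|-2$, and deleting a vertex can in general create homology. Your induction on $|V_0|$ runs into the same wall one step later: to apply the inductive hypothesis to $\K\setminus\{v'\}$ you would need $\widetilde H_{|S|-2}(\K_S\setminus\{v'\},\F)=0$ for every $S\ni 0$, and since $v'$ need not be isolated in its colour there is no evident reason this holds, nor any principled way to choose $v'$. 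The appeal to Lemma~\ref{lem:killing} is too vague to close the gap: you would have to attach simplices that simultaneously kill homology in every $\K_S\setminus\{v'\}$ without disturbing any $\widetilde H_{|S|-2}(\K_S,\F)$, and the lemma offers no such coordinated construction.

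The paper sidesteps this entirely by choosing a different pair for the first Mayer--Vietoris sequence. Rather than decompose $\K_{T\cup\{0\}}$, it decomposes the smaller complex $\K':=\K\big[\bigcup_{i\in T}V_i\cup\{v\}\big]$ as $\K_T\cup(v*\L_T)$, with intersection $\L_T$. The key gain is that the term following $\widetilde H_{|T|-2}(\L_T,\F)$ in the sequence is now $\widetilde H_{|T|-2}(\K_T,\F)\oplus 0$, which vanishes by the hypothesis at $S=T$---a hypothesis your argument never invokes. This reduces the problem to showing that the inclusion-induced map $\widetilde H_{|T|-1}(\K_T,\F)\to\widetilde H_{|T|-1}(\K',\F)$ is surjective, and a second Mayer--Vietoris on the pair $\big(\K',\,\K_{T\cup\{0\}}\setminus\{v\}\big)$---whose union is $\K_{T\cup\{0\}}$ and whose intersection, thanks to the isolation of $v$, is exactly $\K_T$---delivers this surjectivity from the hypothesis $\widetilde H_{|T|-1}(\K_{T\cup\{0\}},\F)=0$.
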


\begin{proof} Consider the following subcomplex of $\K$, called the {\em link} of $v$:
$$\lk(v,\K)=\{\sigma\in \K\colon v \notin \sigma \mbox { and } ( \{v\}\cup\sigma) \in\K\}.$$ 
Suppose without loss of generality $v \in \K_{\{0\}}$. Since $v$ is isolated on its color, then the vertices of $\lk(v,\K)$ are partitioned into $m$ color classes $V^\prime_1,\dots,V^\prime_m$ with $V_i^\prime\subseteq V_i$.  For $S\subseteq \{1,\dots, m\}$, we denote by $\lk(v,\K)_S$ the subcomplex of $\lk(v,\K)$ induced by $\bigcup_{i\in S}V^\prime_i$. Note that we have $\lk(v,\K)_S=\lk(v,\K) \cap \K_S$.
We wish to prove that for every nonempty subset $S\subseteq\{1,\dots,m\}$ we have $\H_{|S|-2}(\lk(v,\K)_S,\F)=0$,  which will imply, by Theorem \ref{thm:homol_sperner}, that there is a rainbow simplex that contains $v$.

Let us consider the Mayer-Vietoris exact sequence of the pair $\big(\K_S, v*\lk(v,\K)_S)$: 
$$\cdots\to\H_{|S|-1}(\K_S,\F)\oplus \H_{|S|-1}(v*\lk(v,\K)_S,\F)\to \H_{|S|-1}(\K^\prime,\F) \to \H_{|S|-2}(\lk(v,\K)_S,\F) \to 0\to\cdots$$
where $\K_S\cap\,(v*\lk(v,\K)_S)=\lk(v,\K)_S$ and $\K^\prime=\K_S\cup\,( v*\lk(v,\K)_S).$
Consequently, we have $\H_{|S|-2}(\lk(v,\K)_S,\F)=0$, provided the homomorphism $\H_{|S|-1}(\K_S,\F)\to \H_{|S|-1}(\K^\prime,\F)$
induced by the inclusion is an epimorphism.  

For that purpose let us consider $\K^{\prime\prime}$ be the subcomplex of $\K$ induced by the vertices in $\bigcup_{i\in S}V_i \cup (V_0\setminus\{v\})$.  Note that $\K^{\prime}$ is the subcomplex of $\K$ induced by the vertices in $\bigcup_{i\in S}V_i \cup \{v\}$. Since $v$ is isolated in $\K_{\{0\}}$, we have that $\K^{\prime\prime}\cup \K^\prime=\K_{S\cup\{0\}}$ and $\K^{\prime\prime}\cap \K^\prime=\K_S.$ The Mayer-Vietoris exact sequence of the pair $\big(\K^{\prime\prime}, \K^\prime\big)$ is
$$\cdots\to\H_{|S|-1}(\K_S,\F)\to \H_{|S|-1}(\K^\prime,\F)\oplus \H_{|S|-1}(\K^{\prime\prime},\F)  \to \H_{|S|-1}(\K_{S\cup\{0\}},\F)=0\to\cdots$$
which implies that the homomorphism $\H_{|S|-1}(\K_S,\F)\to \H_{|S|-1}(\K^\prime,\F)$
induced by the inclusion is an epimorphism as we wished. 
\end{proof}

\begin{corollary}\label{discrete}
Let $\K$ be a simplicial complex whose vertex set is partitioned into colour sets $V_0,\ldots,V_m$ and suppose $\K_{\{i\}}$ is  $0$-dimensional for $i\in \{0,\ldots, m\}$.  If $\H_{|S|-2}(\K_S,\F)=0$ for every nonempty $S\subseteq \{0,\ldots,m\}$, then every simplex is contained in a rainbow simplex.
\end{corollary}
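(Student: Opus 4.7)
The plan is to proceed by induction on $m$. The base case $m=0$ is immediate: the hypothesis for $S=\{0\}$ forces $\K=\K_{\{0\}}$ to be non-empty, and the assumption that $\K_{\{0\}}$ is $0$-dimensional ensures that every simplex of $\K$ is a single vertex of $V_0$, which is trivially rainbow.

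For the inductive step, I would consider an arbitrary simplex $\sigma\in\K$, pick any vertex $v\in\sigma$ with $v\in V_i$, and pass to the link $\L=\lk(v,\K)$. Since $\K_{\{i\}}$ is $0$-dimensional, $v$ is isolated on its colour; hence the vertex set of $\L$ is partitioned into colour classes $V'_j=V_j\cap V(\L)\subseteq V_j$ indexed by $\{0,\dots,m\}\setminus\{i\}$, and each $\L_{\{j\}}\subseteq\K_{\{j\}}$ is again $0$-dimensional.

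The crucial observation is that the Mayer--Vietoris computation carried out inside the proof of Theorem~\ref{thm:isolated} in fact establishes that $\widetilde{H}_{|S|-2}(\L_S,\F)=0$ for every nonempty $S\subseteq\{0,\dots,m\}\setminus\{i\}$ (the case $|S|=1$ in particular gives $V'_j\neq\varnothing$ for every such $j$, so the partition is genuine). Thus $\L$, with its $m$ colour classes, satisfies the hypotheses of Corollary~\ref{discrete} with $m$ replaced by $m-1$. By the induction hypothesis, every simplex of $\L$ extends to a rainbow simplex of $\L$. If $|\sigma|\geq 2$, then $\sigma\setminus\{v\}$ is a non-empty simplex of $\L$ and extends to a rainbow simplex $\rho$ of $\L$; then $\rho\cup\{v\}\in\K$ is a rainbow simplex of $\K$ containing $\sigma$. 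The boundary case $|\sigma|=1$ (so $\sigma=\{v\}$) is handled directly by Theorem~\ref{thm:isolated}, which furnishes a rainbow simplex of $\K$ through $v$.

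The main obstacle, conceptually, is recognising that the proof of Theorem~\ref{thm:isolated} yields more than its statement: as a by-product of its Mayer--Vietoris argument it delivers the full Meshulam-type hypothesis of Corollary~\ref{discrete} \emph{for the link} $\lk(v,\K)$, which is precisely what lets us reduce to one fewer colour. Once this is noticed, the induction on $m$ runs smoothly and no further appeal to the nerve theorems of the paper is needed.
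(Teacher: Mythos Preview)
Your proof is correct and follows essentially the same route as the paper: both arguments pass to the link of a vertex $v\in\sigma$, invoke the Mayer--Vietoris computation from the proof of Theorem~\ref{thm:isolated} to verify that the link again satisfies all the hypotheses, apply induction to $\sigma\setminus\{v\}$ inside the link, and cone back with $v$. The only cosmetic difference is the induction parameter---the paper inducts on $\dim\sigma$ (with the base case $\dim\sigma=0$ handled directly by Theorem~\ref{thm:isolated}), whereas you induct on $m$; the key insight, which you correctly identify, that the proof of Theorem~\ref{thm:isolated} actually delivers the full hypothesis for the link, is explicitly the same in both.
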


\begin{proof}
The proof is by induction on $n$, the dimension of the simplex $\sigma$ for which we want to prove containment in a rainbow simplex. If $n=0$, then the corollary follows from Theorem \ref{thm:isolated}. Suppose the corollary is true for $n-1$, we shall prove it for $n$. Suppose 
$\sigma=\{v_0,\ldots,v_n\}$. Then, by the proof of Theorem~\ref{thm:isolated}, $\lk(v_0,\K)$ satisfies the hypothesis of the corollary. By induction $\{v_1,\ldots,v_n\}$ is contained is a rainbow simplex 
$\{v_1,\ldots,v_n,\ldots, v_m\}$ of $\lk(v_0,\K)$. Consequently $\{v_0,\dots,v_m\}$ is a rainbow simplex of $\K$ containing $\sigma$.
\end{proof}

\section{A polytopal generalization of Meshulam's lemma}\label{sec:homol_sperner_gen}

This section is devoted to a proof of Theorem~\ref{thm:homol_sperner_gen}. Note that Theorem~\ref{thm:homol_sperner} is the special case where $d=m$ and $\M$ is the boundary of the $m$-dimensional simplex with vertex set $\{0,\dots,m\}$.

The following counting lemma will be used in the proof of that theorem. The {\em supporting complex} of a chain is the simplicial complex whose simplices are all simplices in the support of the chain as well as their faces.

\begin{lemma}\label{lem:count}
Let $A$ be a nontrivial abelian group. Consider a simplicial complex $\L$ and a chain $c\in C_s(\L,A)$ such that the supporting complex of $\partial c$ is a pseudomanifold with $n$ vertices. Then the support of $c$ is of cardinality at least $n-s$.
\end{lemma}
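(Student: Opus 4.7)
The plan is to argue by contradiction. Suppose $|\supp c|<n-s$ and choose a counterexample $c$ whose support has minimum cardinality. The key geometric ingredient I will use is a ``rigidity'' property of cycles on a strongly connected pseudomanifold: \emph{if $\P$ is a strongly connected $(s-1)$-pseudomanifold and $z\in C_{s-1}(\P,A)$ is a cycle with $z_{\tau_0}=0$ for some facet $\tau_0$, then $z=0$.} Indeed, any ridge $\rho\subset\tau_0$ is contained in exactly one other facet $\tau'$; the equation $\partial z=0$ on $\rho$ reads $\pm z_{\tau_0}\pm z_{\tau'}=0$, so $z_{\tau'}=0$, and strong connectivity propagates this to every facet. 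In particular, every nonzero cycle supported on $\P$ assigns a nonzero coefficient to every facet of $\P$.

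I will then show that $\supp c$ is connected in the dual graph whose edges are pairs of $s$-simplices sharing an $(s-1)$-face. Assume a nontrivial partition $\supp c=A\sqcup B$ with no shared $(s-1)$-face across the partition, and split $c=c_A+c_B$ accordingly. For each $(s-1)$-face $\tau$, only one of $(\partial c_A)_\tau,(\partial c_B)_\tau$ can be nonzero; combined with $\partial c_A+\partial c_B=\partial c$, this forces $\supp\partial c_A$ and $\supp\partial c_B$ to be disjoint subsets of the facets of $\P$ whose union is the whole set of facets, and forces $\supp\partial c_A,\supp\partial c_B\subseteq$ facets$(\P)$. Thus $\partial c_A$ and $\partial c_B$ are both cycles supported in $\P$, so by the rigidity above each is either zero or has full support; disjointness then forces one of them, say $\partial c_A$, to vanish. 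Setting $c':=c-c_A=c_B$ gives $\partial c'=\partial c$ and $|\supp c'|<|\supp c|$, contradicting minimality.

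Once connectivity of the dual graph is established, the contradiction is a short vertex count. Order $\supp c=\{\sigma_1,\dots,\sigma_N\}$ so that for each $i\geq 2$ the simplex $\sigma_i$ shares an $(s-1)$-face with some $\sigma_j$, $j<i$; such an ordering exists by connectedness of the dual graph. Then $\sigma_1$ contributes $s+1$ vertices, and each later $\sigma_i$ shares $s$ of its vertices via the common $(s-1)$-face with an earlier simplex, so contributes at most one new vertex. The supporting complex of $c$ therefore has at most $(s+1)+(N-1)=N+s$ vertices. Since $V(\P)$ is contained in the vertex set of the supporting complex of $c$, we obtain $n\leq N+s$, i.e.\ $N\geq n-s$, contradicting $N<n-s$.

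The main obstacle is the rigidity step: it is where strong connectivity of the pseudomanifold is really used, and it is what justifies the reduction from an arbitrary $c$ to a dual-graph-connected one. The rest (vertex counting) is elementary, but one must be careful that the argument works over an arbitrary nontrivial abelian group $A$, which it does: the propagation $z_\tau=0\Rightarrow z_{\tau'}=\pm z_\tau=0$ is valid in any abelian group.
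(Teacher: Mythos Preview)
Your proof is correct and follows essentially the same approach as the paper's: both reduce to the case where the dual graph on $\supp c$ is connected using the strong connectivity of the pseudomanifold $\P$, and then do the same vertex count. The only cosmetic differences are that you package the reduction via a minimal counterexample and an explicit ``rigidity'' lemma, whereas the paper directly picks the unique dual-graph component $K_0$ with $\partial c_{K_0}\neq 0$, and the paper phrases the vertex count as an induction rather than an ordering.
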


\begin{proof}
Consider the graph $G(c)$ whose vertices are the $s$-dimensional simplices in the support of $c$ and whose edges connect two simplices having a common facet. For a connected component $K$ of $G(c)$, we denote by $c_K$ the chain obtained from $c$ by keeping only the $s$-dimensional simplices corresponding to vertices in $K$. Since the supporting complex of $\partial c$ is a pseudomanifold, it is strongly connected (see the definition in Section~\ref{sec:intro}) and only one connected component $K_0$ is such that $\partial c_{K_0}$ is nonzero. The supporting complex of $c_{K_0}$ has at least $n$ vertices.

We prove now that any chain $c'$ in $C_s(\L,A)$, such that $G(c')$ is connected and whose supporting complex has at least $n$ vertices, has a support of cardinality at least $n-s$. This implies then directly the desired result. The proof works by induction on the cardinality $k$ of the support of $c'$. If $k=1$, the statement is obviously true: $s+1-s=1$. Suppose that $k>1$. In a connected graph with at least one edge, there is at least one vertex whose removal does not disconnect the graph. We can thus remove a simplex from the support of $c'$ and obtain a new chain $c''$ such that $G(c'')$ is still connected. Note that the removed simplex has a facet in common with a simplex in the support of $c''$. It means that at most one vertex has been removed from the supporting complex of $c'$. By induction, we have $k-1\geq n-1-s$, and thus $k\geq n-s$, as required.
\end{proof}

The proof of Theorem~\ref{thm:homol_sperner_gen} we propose uses a technique presented in the recent survey by De Loera et al.~\cite[Proposition 2.5]{DeL} for proving Meshulam's lemma.

\begin{proof}[Proof of Theorem~\ref{thm:homol_sperner_gen}]
First, we prove the existence of a map $f_{\sharp}\colon\C(\M,A)\rightarrow\C(\K,A)$ that is augmentation preserving and such that for every $\sigma\in\M$ the support of $f_{\sharp}(\sigma)$ is contained in $\K[\bigcup_{i\in\sigma}V_i]$. We proceed by induction on $k$ and prove that the statement is true for $|\sigma|\leq k$. When $k=0$, we define $f_{\sharp}(i)$ to be any vertex in $V_i$ (which exists because $\H_{-1}(\K[V_i],A)=0$). Suppose now that the statement is true up to $k-1$. For a simplex $\sigma$ such that $|\sigma|=k$, we have $\partial f_{\sharp}(\partial\sigma)=0$ (we apply the induction hypothesis: it is chain map). Since $\H_{k-2}(\K[\bigcup_{i\in\sigma}V_i],A)=0$, there exists an element $f_{\sharp}(\sigma)$ in $C_{k-1}(\K[\bigcup_{i\in\sigma}V_i],A)$ such that $\partial f_{\sharp}(\sigma)=f_{\sharp}(\partial\sigma)$.

Second, define $\lambda\colon V(\K)\rightarrow\{0,\dots,m\}$ by $\lambda(v)=i$ for $v\in V_i$. It induces a simplicial map $\lambda\colon\K\rightarrow\Delta$, where $\Delta$ is the $m$-dimensional simplex with $\{0,\dots,m\}$ as vertex set, and considered as a simplicial complex. Note that $\lambda_{\sharp}$ applied on an $m$-dimensional simplex is nonzero if and only if that simplex is rainbow.  We claim that $(\lambda_{\sharp}\circ f_{\sharp})(\sigma)=\sigma$ for any oriented simplex $\sigma$ of $\C(\M,A)$ (note that $\M$ is a subcomplex of $\Delta$) and we will prove it by induction. Since $f_{\sharp}$ is augmentation-preserving, this is obviously true when $\sigma$ is $0$-dimensional. Take now any oriented simplex $\sigma\in\M$. Since the support of $f_{\sharp}(\sigma)$ is contained in $\K[\bigcup_{i\in\sigma}V_i]$, the chain $(\lambda_{\sharp}\circ f_{\sharp})(\sigma)$ is of the form $x\sigma$ for some $x\in A$. By induction, we have 
$$\partial(\lambda_{\sharp}\circ f_{\sharp})(\sigma)=(\lambda_{\sharp}\circ f_{\sharp})(\partial\sigma)=\partial\sigma.$$ Thus $x\partial\sigma=\partial\sigma$, which means that $x=1$.

Third, consider the chain $z\in C_d(\M,A)$ equal to the sum of all $d$-dimensional oriented simplices of $\M$ (with unitary coefficients) so that $\partial z=0$. Such a chain exists because $\H_d(\M,A)=A$. Now, consider the chain $c'\in C_{d+1}(\K,A)$ defined by $\partial c'=f_{\sharp}(z)$. Such a $c'$ exists because of the condition $\H_d(\K,A)=0$.  We have $\lambda_{\sharp}(\partial c')=z$ since $\lambda_{\sharp}\circ f_{\sharp}$ is the inclusion chain map. According to Lemma~\ref{lem:count} with $n=m+1$ and $s=d+1$,  there are at least $m-d$ simplices in $c=\lambda_{\sharp}(c')$, which means that there exist at least that number of rainbow simplices in $\K$.
\end{proof}

%

\bigskip

\noindent{\bf Acknowledgements.} The authors are grateful to the reviewers for their comments and suggestions that helped improve the paper, especially by simplifying several proofs. They thank Andreas Holmsen for pointing out the paper by Anders Bj\"orner and the similarity of the approach used in the proof of Theorem~\ref{thmunion} with a technique introduced in that paper. Finally, the second author wishes to acknowledge  support  form CONACyT under project 166306,  support from PAPIIT-UNAM under project IN112614.

\bibliographystyle{plain} 
\bibliography{nerve}

\end{document}